\numberwithin{equation}{section}
\newtheorem{theorem}{Theorem}[section]
\newtheorem*{theorem*}{Theorem}
\newtheorem{lemma}{Lemma}[section]
\newtheorem{prop}[lemma]{Proposition}
\newtheorem{corollary}[lemma]{Corollary}
\theoremstyle{definition}
\newtheorem{definition}[lemma]{Definition}
\theoremstyle{remark}
\newtheorem{remark}[lemma]{Remark}
\newcommand{\be}{\begin{equation}}
\newcommand{\ee}{\end{equation}}
\newcommand{\bu}{\mathbf{u}}
\newcommand{\br}{\mathbf{r}}
\renewcommand{\d}{\, \mathrm{d}}
\newcommand{\der}{\mathrm{d}}
\renewcommand{\leq}{\leqslant}
\renewcommand{\geq}{\geqslant}
\newcommand*{\defeq}{\mathrel{\vcenter{\baselineskip0.5ex \lineskiplimit0pt
                     \hbox{\scriptsize.}\hbox{\scriptsize.}}}%
                     =}
\newcommand*{\eqdef}{=\mathrel{\vcenter{\baselineskip0.5ex \lineskiplimit0pt
                     \hbox{\scriptsize.}\hbox{\scriptsize.}}}%
                     }
\title[Carrollian Fluids in One Dimension]{One-Dimensional Carrollian Fluids II: $C^1$ Blow-Up Criteria}
\author[N.~Athanasiou]{Nikolaos Athanasiou}
\address[N.~Athanasiou]{Department of Mathematics, University of Crete,
Voutes Campus, 70013 Heraklion, Greece}
\email{n.athanasiou@uoc.gr}
\author[P.~M.~Petropoulos]{Marios Petropoulos}
\address[P.~M.~Petropoulos]{Centre de Physique Th\'eorique, Ecole Polytechnique, 91120 Palaiseau, France}
\email{marios.petropoulos@polytechnique.edu}
\author[S.~M.~Schulz]{Simon Schulz}
\address[S.~M.~Schulz]{Scuola Normale Superiore, Centro De Giorgi, P.zza dei Cavalieri, 3,  56126 Pisa, Italy}\email{simon.schulz@sns.it}
\author[G.~Taujanskas]{Grigalius Taujanskas}
\address[G.~Taujanskas]{Faculty of Mathematics, Wilberforce Road,
Cambridge CB3 0WA,
UK}
\email{taujanskas@dpmms.cam.ac.uk}
\keywords{Carrollian physics, classical solution, $C^1$ blow-up, Riccati equation}
\subjclass[2020]{35B44, 35L40, 35Q35, 35Q75, 85A30} 
\thanks{\emph{Centre de Physique Th\'eorique Preprint Number.} CPHT-RR027.052024.}
\begin{document}
\begin{abstract}
    The Carrollian fluid equations arise from the equations for relativistic fluids in the limit as the speed of light vanishes, and have recently experienced a surge of interest in the theoretical physics community in the context of asymptotic symmetries and flat-space holography. In this paper we initiate the rigorous systematic analysis of these equations by studying them in one space dimension in the $C^1$ setting. We begin by proposing a notion of \emph{isentropic} Carrollian equations, and use this to reduce the Carrollian equations to a $2 \times 2$ system of conservation laws. Using the scheme of Lax, we then classify when $C^1$ solutions to the isentropic Carrollian equations exist globally, or blow up in finite time. Our analysis assumes a Carrollian analogue of a \emph{constitutive relation} for the Carrollian energy density, with exponent in the range $\gamma \in (1, 3]$.
\end{abstract}

\maketitle
\thispagestyle{empty}


\setcounter{tocdepth}{1}
\tableofcontents

\section{Introduction}

This paper is the second in the series \cite{Duality,Carrollcomp} in which we initiate the systematic study of the well-posedness theory of the flat Carrollian fluid equations. This work is concerned with the $C^1$ theory of the one-dimensional \emph{isentropic}\footnote{See \S \ref{sec:presentation} for an explanation of the terminology \emph{isentropic}. Briefly, the system \eqref{eq:carroll eqns i} is the dual of the isentropic Galilean Euler equations under the Carroll--Galilei duality mapping of \cite{Duality}.} \emph{Carrollian fluid equations}
\begin{equation}\label{eq:carroll eqns i}
    \left\lbrace\begin{aligned}
        &\partial_t (\sigma \beta) + \partial_x \sigma = 0, \\ 
        & \partial_t \left(\gamma^{-1} \sigma^\gamma + \sigma \beta^2\right) + \partial_x (\sigma \beta) = 0, 
    \end{aligned}\right.
\end{equation}
posed in $(t,x) \in (0,\infty) \times \mathbb{R} \eqdef  \mathbb{R}^2_+$, where $\gamma \in (1,3]$; the quantity $\sigma$ is called the \emph{Carrollian stress}, with the term $\gamma^{-1}\sigma^\gamma$ being the \emph{Carrollian internal energy density} corresponding to a polytropic constitutive relation in the dual Galilean setting (\textit{cf.}~\cite{Duality}), and $\beta$ is the \emph{Carrollian velocity}. Using the classical method of Lax (\textit{cf}.~\cite{Lax}), we establish necessary and sufficient conditions for the finite-time blow-up of solutions that are initially continuously differentiable. Our main result is the following (see \S\ref{sec:main results} for detailed statements).

\begin{theorem*}
    Let $\gamma \in (1, 3]$ and let $(\sigma_0, \beta_0) \in C^1(\mathbb{R})$ be admissible initial data for \eqref{eq:carroll eqns i}. Then there exists a unique global classical solution $(\sigma, \beta) \in C^1(\mathbb{R}^2_+)$ to \eqref{eq:carroll eqns i} if and only if the initial data is everywhere rarefactive. Conversely, if the initial data is compressive somewhere, the local solution ceases to be continuously differentiable in finite time $T^*$, which we control quantitatively in terms of the initial data.
\end{theorem*}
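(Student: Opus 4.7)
The plan is to follow Lax's classical scheme for genuinely nonlinear $2 \times 2$ hyperbolic systems with $C^1$ initial data. First I would pass from \eqref{eq:carroll eqns i} to quasilinear form $\partial_t (\sigma, \beta)^T + A(\sigma, \beta)\, \partial_x (\sigma, \beta)^T = 0$ by inverting the Jacobian of the conserved densities $(\sigma\beta,\, \gamma^{-1}\sigma^\gamma + \sigma\beta^2)^T$. A direct computation gives the eigenvalues $\lambda_\pm = (\beta \pm \sigma^{(\gamma-1)/2})^{-1}$ on the physical region $\{\sigma > 0,\; \beta^2 \neq \sigma^{\gamma-1}\}$, with polytropic Riemann invariants
\[
r = \beta + \frac{2}{\gamma-1}\sigma^{(\gamma-1)/2}, \qquad s = \beta - \frac{2}{\gamma-1}\sigma^{(\gamma-1)/2}
\]
constant along the $\lambda_\pm$-characteristics respectively. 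The maximum principle for Riemann invariants then propagates $\|r\|_{L^\infty}$ and $\|s\|_{L^\infty}$ from the initial data, giving uniform control of $\sigma$ and $\beta$ throughout the lifespan and a uniform lower bound on the wave-speed separation $|\lambda_+ - \lambda_-| = 2\sigma^{(\gamma-1)/2}/|\beta^2 - \sigma^{\gamma-1}|$ as long as $\sigma$ stays bounded away from $0$ and $\beta^2$ away from $\sigma^{\gamma-1}$.

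Next I would differentiate the Riemann-invariant equations in $x$ and apply Lax's integrating-factor trick. With $D_\pm \defeq \partial_t + \lambda_\pm \partial_x$, the derivatives $p \defeq r_x$ and $q \defeq s_x$ satisfy
\[
D_+ p = -(\partial_r \lambda_+)\, p^2 - (\partial_s \lambda_+)\, pq, \qquad D_- q = -(\partial_s \lambda_-)\, q^2 - (\partial_r \lambda_-)\, pq.
\]
Seeking a positive factor $h = h(r, s)$ with $y \defeq h\, p$ satisfying a pure Riccati ODE along $\lambda_+$-characteristics, a short calculation using $D_+ s = (\lambda_+ - \lambda_-)\, q$ forces $\partial_s \log h = (\partial_s \lambda_+)/(\lambda_+ - \lambda_-)$; a symmetric choice $k$ handles $z \defeq k\, q$. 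In the $(r, s)$-chart one has the explicit form $\lambda_+ = 4/[(\gamma+1)r + (3-\gamma)s]$, from which $\partial_r \lambda_+ < 0$ and (by symmetry) $\partial_s \lambda_- < 0$ on the physical region, so both fields are genuinely nonlinear and the reduction yields
\[
D_+ y = -\Phi(r, s)\, y^2, \qquad D_- z = -\Psi(r, s)\, z^2,
\]
with $\Phi, \Psi$ smooth and of constant sign on the physical region, bounded above and below by constants depending only on the initial data.

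The conclusion then follows from the elementary theory of Riccati equations. If the data is rarefactive, meaning $y_0$ and $z_0$ have the sign making the right-hand sides of the Riccati identities dissipative, this sign is propagated in time, so $p$ and $q$ remain pointwise bounded and the standard continuation criterion for strictly hyperbolic systems extends the local $C^1$ solution to all of $\mathbb{R}^2_+$. If instead the data is compressive at some $x_0$, integrating the relevant Riccati ODE along the characteristic through $(0, x_0)$ gives a quantitative blow-up time $T^*$ of order $1/|\Phi \cdot y_0(x_0)|$ or $1/|\Psi \cdot z_0(x_0)|$ beyond which $r_x$ or $s_x$ ceases to be bounded. The main obstacle I anticipate is propagating admissibility: showing that the solution stays away from $\sigma = 0$ and $\beta^2 = \sigma^{\gamma-1}$ during its lifespan, since the quasilinear reduction and the integrating factor both degenerate there. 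The hypothesis $\gamma \in (1, 3]$ should enter precisely because the factor $(3 - \gamma)$ governs the sign and smoothness of $h$ and of the Riccati coefficients $\Phi, \Psi$; careful sign bookkeeping in the Carrollian setting, where the characteristic speeds appear as reciprocals of their Galilean counterparts, is therefore the principal technical task.
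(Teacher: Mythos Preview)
Your proposal is correct and follows essentially the same approach as the paper: the quasilinear reduction, the Riemann invariants $r,s$ (the paper's $w_1,w_2$), the Lax integrating factor yielding scalar Riccati equations along characteristics, and the dichotomy between rarefactive (global) and compressive (finite-time blow-up) data are all exactly what the paper does. The paper handles your anticipated ``main obstacle'' --- propagating admissibility so that $\sigma>0$ and $\beta\pm\sigma^{(\gamma-1)/2}\neq 0$ throughout the lifespan --- via an explicit invariant-region argument (their Proposition~\ref{prop:invariant regions}), which for $\gamma\in(1,3)$ requires the additional structural condition \eqref{eq:initial data statement gamma general blow up C1 second} on the initial data beyond the basic sign condition \eqref{eq:initial data statement gamma general blow up C1}; the case $\gamma=3$ is treated separately since then $\lambda_\pm=1/w_{1,2}$ and the diagonalised system decouples completely, so only \eqref{eq:initial data statement gamma general blow up C1} is needed.
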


The terms \emph{rarefactive} and \emph{compressive} are explained in \S \ref{sec:compression rarefaction}.  The results of the present paper motivate the need for the companion paper \cite{Carrollcomp}, where the authors establish global-in-time existence in a less regular functional setting for the exponent $\gamma=3$ (which is the only instance where \eqref{eq:carroll eqns i} can be rewritten as a conservative system for $(\sigma,\beta)$; see \cite[\S 2]{Carrollcomp}) using the theory of compensated compactness.

\subsection{Physical and mathematical context} The Carrollian fluid equations are formally derived as the limit of the relativistic fluid equations $\nabla_a T^{ab} = 0$, where $T_{ab}$ is the standard energy-momentum tensor for relativistic fluids, when the speed of light $c$ vanishes \cite{CiambelliMarteauPetkouPetropoulosSiampos2018,PetkouPetropoulosRiveraBetancourSiampos2022,Duality,Hartong2015,deBoerHartongObersSybesmaVandoren2018}. The $c \to 0$ limit (the Carrollian limit) of the Poincar\'e group goes back to the work of L\'evy-Leblond \cite{LevyLeblond1965} and Sen Gupta \cite{SenGupta1966} in the 60s; aside from the contribution of Henneaux \cite{Hennaux1979}, however, these ideas appear to have been dormant until recently, when Carrollian geometry and physics experienced a renewed interest in the mathematical physics community in the context of asymptotic symmetries and  flat-space holography \cite{DuvalGibbonsHorvathyZhang2014,DuvalGibbonsHorvathy2014,DuvalGibbonsHorvathy2014b,BekaertMorand2016,BekaertMorand2018,Morand2020,CiambelliLeighMarteauPetropoulos2019,Herfray2022}. In $d+1$ spacetime dimensions the Carrollian limit gives a degenerate $(d+1)$-dimensional metric. Degenerate metrics appear on embedded null hypersurfaces as well as on null boundaries of $(d+2)$-dimensional asymptotically flat spacetimes, obtained in the vanishing cosmological constant limit from asymptotically $\mathrm{AdS}_{d+2}$. Through a correspondence between fluid observables on the boundary and gravity in the bulk, Carrollian fluids are then seen to encode elements of the holographic duals of the gravitational field in an asymptotically flat spacetime.

Even in the simplest\footnote{We study non-perfect Carrollian fluids which are dual to perfect Galilean fluids; see \S\ref{sec:presentation} and \cite{Duality}.} case of Carrollian spacetimes with $d=1$, equipped with a flat connection, however, no rigorous mathematical theory appears to exist for the Carrollian fluid equations. In this case the Carrollian limit of the relativistic fluid equations yields three conservation laws in five observables: the Carrollian velocity $\beta$, the Carrollian stress $\sigma$, a generalized Carrollian pressure $\varpi$, an internal energy density $\epsilon$, and a Carrollian heat current $\pi$, which is assumed to be given.\footnote{The generalized pressure $\varpi$ is the total stress, \emph{i.e.} the equilibrium pressure plus the viscous stress. Accordingly, the Carrollian stress $\sigma$ should have been referred to as \emph{superstress} because it appears at higher order in the $c^{-1}$ expansion of the relativistic stress (see \cite{Duality}). Together with the Carrollian heat current $\pi$, the data $\varpi$ and  $\sigma$ betray the remoteness from an ideal fluid. Be this as it may, this nomenclature remains formal, as to date there is no thermodynamic, microscopic or kinetic theory for Carrollian fluids and concepts such as a continuous medium or the entropy are just borrowed from Galilean physics without \emph{ab initio} definition (attempts can be found in \cite{deBoerHartongObersSybesmaVandoren2023}).} This gives a $3 \times 2$ system of conservation laws. In $1+1$ dimensions it is known that the Carrollian and Galilean symmetry groups are isomorphic, and in fact there exists a duality map between the Carrollian fluid equations and the three Galilean compressible Euler equations \cite{Duality}. This duality map interchanges time and space, and maps Carrollian observables to their Galilean counterparts nonlinearly (see \Cref{tab:duality_table}). We use the Carroll--Galilei duality to trivialize the equation for $\varpi$, by postulating the constitutive relation $\epsilon = \gamma^{-1} \sigma^\gamma$ for the internal energy density and an expression for $\varpi$, borrowing directly by duality from the Galilean expressions for ideal polytropic gases. This reduces the system to a $2\times 2$ system of conservation laws for $\sigma$ and $\beta$, which we suggest to call the \emph{isentropic Carrollian fluid equations}. The details of this reduction are given in \S\ref{sec:presentation}. The resulting system is precisely \eqref{eq:carroll eqns i}, and is the focus of the analysis of this paper.

Our goal is to study the Cauchy problem for \eqref{eq:carroll eqns i}. A first natural question to ask concerns the character of the equations. Indeed, individual motion ought to be prohibited in the Carrollian limit, which might suggest that the system \eqref{eq:carroll eqns i} should represent a stationary system. This is not the case, however, both from a physical perspective, as collective Carrollian phenomena may depend on time (see \emph{e.g.} \cite{BergshoeffGomisLonghi2014}), or in the sense of the classification of PDEs. Indeed, it can be seen by duality that the equations are hyperbolic at least in some region of phase space, since the isentropic compressible Euler equations are hyperbolic on the Galilean side. Nevertheless, knowledge of the Galilean solutions and the existence of the duality mapping does not provide information about the Carrollian Cauchy problem due to the fact that the Galilean Cauchy problem is mapped to a \emph{boundary} value problem under duality. The Carrollian Cauchy problem therefore has to be studied anew. As we will show, the isentropic Carrollian fluid equations \eqref{eq:carroll eqns i} are a genuinely nonlinear $2 \times 2$ system of hyperbolic conservation laws. Hyperbolic conservation laws have an extremely rich history in the literature, see \textit{e.g.}~\cite{Dafermos2016} and the references therein; in particular, it is classical that in evolution such systems may develop shocks, leading to losses of regularity. Shocks may be exhibited through a method of Lax \cite{Lax}, which has been used both in cases of $2 \times 2$ hyperbolic systems as well as nonlinear wave equations: see for instance the classical references \cite{JohnSing,LiuSing,KlainSing,Sideris}. For completeness, we recap Lax's method in \S\ref{sec:lax background}. In the context of Galilean fluids, similar techniques have been used to treat both the one-dimensional isentropic compressible Euler equations\footnote{In its Lagrangian form, the so-called $p$-system.} \cite{Geng1} and the one-dimensional non-isentropic compressible Euler equations \cite{Geng2}, as well as the three-dimensional spherically symmetric equations of magneto-hydrodynamics \cite{Geng3}. The \emph{relativistic} Euler equations have been studied in \cite{NikosSheng1,AthTianZhu}; the former contribution treats the isentropic relativistic Euler system (giving necessary and sufficient conditions for singularity formation), while the latter provides sufficient conditions for blow-up of the non-isentropic system. In contrast to the Galilean Euler equations, the relativistic equations admit no obvious Lagrangian formulation, which makes their analysis rather different; in particular, necessary conditions for singularity formation in the non-isentropic setting are currently out of reach. For further details and background, we refer the reader to the recent doctoral thesis\footnote{Bayles-Rea's thesis is devoted entirely to the theory of $C^1$ singularity formation for the compressible Euler equations and treats in particular the isentropic and non-isentropic cases, as well as the classical and relativistic formulations.} of Bayles-Rea \cite{TianruiThesis}.

\subsection{Results}

In this paper we prove that the system \eqref{eq:carroll eqns i} does in fact develop shocks in the $C^1$ setting, and classify when they occur. More precisely, we classify initial data which preserve the hyperbolicity of the system for all times, and use the method of Lax \cite{Lax} to establish necessary and sufficient conditions on the initial data for $C^1$ blow-up\footnote{Sharp criteria for blow-up of the Lipschitz, H\"older, and $BV$ norms will be the subject of future investigations.} in finite time for all exponents $\gamma \in (1,3]$, the so-called Galilean physical range. The case $\gamma = 1$ corresponds to a linear pressure term, the analysis of which is different and will be the focus of future works; on the Galilean side this corresponds to the case of an isothermal gas. Along the way we provide a robust definition of compression and rarefaction for the system \eqref{eq:carroll eqns i}.

Our analysis is based on obtaining Riccati-type equations for the evolution of Riemann invariants along characteristic curves. In harmony with results on the Galilean side (e.g. \cite{Geng1,NikosSheng1,AthTianZhu}), we prove that (any) compression is a necessary and sufficient condition for singularity formation in finite time. We note that here the usual notions of \emph{compression} and \emph{rarefaction} must be refined due to the relative signs of the Riemann invariants and the derivatives of the eigenvalues of the system: see \S \ref{sec:compression rarefaction}. As explained also in the companion paper \cite{Carrollcomp}, we furthermore find that the system \eqref{eq:carroll eqns i} exhibits a novel type of degeneracy, namely that the eigenvalues of the flux matrix degenerate along entire curves $\left\{\beta = \pm \sigma^{(\gamma-1)/2} \right\}$ in the phase plane, leading to a loss of strict hyperbolicity on these loci (as well as the typical degeneracy at the \emph{Carrollian liquescence}\footnote{In the Galilean setting, the locus $\{\rho=0\}$ is called the vacuum (or cavitation); however, for a Carrollian fluid, $\sigma=0$ is representative of the fluid becoming inviscid, which we propose to call \emph{Carrollian liquescence}.} $\{ \sigma = 0 \}$). A core part of this work involves establishing invariant regions in phase space which constrain the system to remain strictly hyperbolic; in the special case $\gamma=3$, these invariant regions are the same as the ones established in the companion paper \cite[\S 6]{Carrollcomp}, although they are obtained using a different approach.

\subsection{Plan of the paper.} In \S \ref{sec:core} we introduce the isentropic Carrollian fluid system and explain the duality with the compressible Euler equations (\S \ref{sec:presentation}), state our main theorems (\S \ref{sec:main results}), and provide a short summary of the method of Lax for $C^1$ blow-up of general strictly hyperbolic systems (\S \ref{sec:lax background}). Then, in \S \ref{sec:hyp}, we establish the hyperbolicity and genuine nonlinearity of the system in the relevant regions of the phase space. In \S \ref{sec:invariant} we establish conditions on initial data which constrain the solutions to the hyperbolic regions of phase space for all times, and in \S \ref{sec:local} we establish the local-in-time existence and uniqueness of $C^1$ solutions. Finally, \S \ref{sec:gamma 3} is concerned with the $C^1$ blow-up criterion for the case $\gamma=3$, while \S \ref{sec:blow up} is concerned with the general case $\gamma \in (1,3)$.

\section{Core Notions and Main Results}\label{sec:core}

\subsection{Isentropic Carrollian Equations.}\label{sec:presentation} The non-perfect Carrollian fluid equations on a flat background with one space dimension are the $3 \times 2$ system 
\begin{equation}\label{eq:carroll_eqns_0}
    \left\lbrace\begin{aligned}
        &\partial_t (\beta \sigma) + \partial_x \sigma = 0, \\ 
        & \partial_t (\epsilon + \beta^2 \sigma) + \partial_x (\beta \sigma) = 0, \\
        & \partial_t (\beta \varpi) + \partial_x \varpi = - \partial_t (\beta \epsilon  + \pi), 
    \end{aligned}\right.
\end{equation}
where we assume that the \emph{internal energy density} $\epsilon$ satisfies the constitutive relation $\epsilon = \gamma^{-1}\sigma^\gamma$, borrowing this constitutive relation directly by duality from the usual Galilean constitutive relation $p = \gamma^{-1} \rho^\gamma$. The unknowns $\sigma$ and $\beta$ are the Carrollian stress and Carrollian velocity, as described in the introduction (see also \cite[Eqs.~(94)--(96)]{Duality}). The third unknown, $\varpi$, we refer to as the \emph{generalized Carrollian pressure}, while $\pi$ is a \emph{Carrollian heat current} and is assumed to be given. We highlight that, assuming the constitutive relation for $\epsilon$, the first two equations decouple and form an autonomous system since they do not involve the third unknown $\varpi$. 

Under the Carroll--Galilei duality mapping, the system \eqref{eq:carroll_eqns_0} is dual to the full compressible Euler system of Galilean perfect-fluid mechanics,\footnote{As explained in \cite{Duality}, the Carroll--Galilei duality interchanges the longitudinal and transverse directions, permuting therefore equilibrium and out-of-equilibrium observables. This explains why a Galilean perfect fluid is mapped onto a non-perfect Carrollian one.} \textit{i.e.} 
\begin{align}
    \label{full_galilean_euler_1}
   & \partial_t \rho + \partial_x (\rho v ) = 0, \\
    \label{full_galilean_euler_2}
    & \partial_t (\rho v) + \partial_x (\rho v^2 + p ) = 0, \\
    \label{full_galilean_euler_3}
    & \partial_t \Big(\rho e + \frac{1}{2} \rho v^2 \Big) + \partial_x \Big( \rho e v + \frac{1}{2} \rho v^3 + p v \Big) = 0, 
\end{align}
where $\rho$ is the mass density of the fluid, $v$ its velocity, $p$ the pressure and $e$ the internal specific energy. Supplemented with the constitutive relation and the energy equipartition law for an ideal polytropic gas 
\begin{equation}
\label{isentropic_Galilei_pressure} 
 p  = K \text{e}^{\nicefrac{s}{c_v}} \rho^\gamma \qquad \text{and} \qquad e = \frac{1}{\gamma - 1} \frac{p}{\rho},
 \end{equation}
where $\gamma > 1$, $K$ is a dimensionful positive constant, $c_v$ is the specific thermal capacity of the gas, and $s = s(t,x)$ is the specific entropy of the system (\textit{cf.}~\cite{Landau}), a standard calculation (\textit{cf.}~\textit{e.g.}~\cite[\S 1]{ChenChenZhu2019}) using the product rule shows that for classical solutions of \eqref{full_galilean_euler_1}--\eqref{full_galilean_euler_3} these assumptions reduce the Galilean time-energy equation \eqref{full_galilean_euler_3} to the stationarity of entropy, 
\[ \partial_t s = 0. \]
The fluid being ideal in the case under consideration, the entropy is conserved, \emph{i.e.} $\nicefrac{\text{d}s}{\text{d}t}=0$, hence $s$ is a constant. The system \eqref{full_galilean_euler_1}--\eqref{full_galilean_euler_3}
reduces to the $2 \times 2$ system of \emph{isentropic Euler equations}
\begin{equation*}
 \left\lbrace \begin{aligned}  & \partial_t \rho + \partial_x (\rho v ) = 0, \\
    & \partial_t (\rho v) + \partial_x (\rho v^2 + p ) = 0,
    \end{aligned}\right.
\end{equation*}
where, without loss of generality (from now on the dynamical variables are dimensionless), 
\begin{equation} \label{ideal_polytropic_gas} p = \frac{1}{\gamma} \rho^\gamma \qquad \text{and} \qquad e = \frac{1}{\gamma - 1} \frac{p}{\rho} = \frac{1}{\gamma(\gamma-1)} \rho^{\gamma-1}. \end{equation}
Via duality with the Galilean equations \eqref{full_galilean_euler_1}--\eqref{full_galilean_euler_3}, this provides a natural way of reducing the full Carrollian system \eqref{eq:carroll_eqns_0} to a $2 \times 2$ system. We recall that the duality map exchanges time and space, the Carrollian stress $\sigma$ and Galilean density $\rho$, the Carrollian and Galilean velocities $\beta$ and $v$, the generalized Carrollian pressure $\varpi$ with the total Galilean energy density, and the Carrollian internal energy density $\epsilon$ with the generalized Galilean pressure, which coincides with the thermodynamic pressure due to the ideal nature of the Galilean fluid, as summarised in \Cref{tab:duality_table}.

\begin{table}[H]
    \begin{center}
    \def\arraystretch{1.3}
    \begin{tabularx}{0.633\textwidth}{ | c | c | c | c | c | c | c | } 
    \hline
    Galilean variable & $t$ & $x$ & $\rho$ & $v$ & 
    $p= \gamma^{-1} \rho^\gamma$ & $\rho(e + \frac{1}{2}v^2)$
    \\ 
    \hline
    Carrollian variable & $x$ & $t$ & $\sigma$ & $\beta$ & $\epsilon = \gamma^{-1} \sigma^\gamma$ & $\varpi$ \\ 
    \hline
    \end{tabularx}
    \end{center}
    \caption{A summary of relevant Galilean variables and their Carrollian duals.}
    \label{tab:duality_table}
\end{table}

The ideal polytropic gas assumptions \eqref{ideal_polytropic_gas} then suggest the following dual Carrollian internal energy density and generalized Carrollian pressure
\begin{equation} \label{isentropic_generalized_Carroll_pressure} 
 \epsilon = \frac{1}{\gamma} \sigma^\gamma \qquad \text{and} \qquad 
\varpi = \frac{1}{2} \sigma \beta^2 + \frac{1}{\gamma (\gamma-1)} \sigma^\gamma, \end{equation}
which can be checked by direct computation to formally satisfy the Carrollian space-momentum equation
\[ \partial_t (\beta \varpi ) + \partial_x \varpi = - \partial_t (\beta \epsilon + \pi) \]
with
\[ \pi = 0 \]
as a consequence of the first two Carrollian equations in \eqref{eq:carroll_eqns_0}. The first of \eqref{isentropic_generalized_Carroll_pressure} could be more accurately expressed as the first equation in \eqref{isentropic_Galilei_pressure}:
\[ \epsilon  = \tilde{K} \text{e}^{\nicefrac{\tilde{s}}{\tilde{c}_v}} \sigma^\gamma,  \]
with $\tilde{K}$ some dimensionful constant and $\tilde{s}$, $\tilde{c}_v$ two thermodynamic-like Carrollian variables. It is tempting to interpret $\tilde{s}$, \emph{i.e.}
\[ \tilde{s} = \tilde{c}_v \log \left( \frac{\epsilon \sigma^{-\gamma}}{\tilde{K}} \right), \]
as a \emph{Carrollian entropy}, remaining constant in the evolution of the system at hand. The latter will thus be referred to as \emph{an isentropic Carrollian fluid.}

We therefore make the following definition.
\begin{definition}[Isentropic Carrollian Equations]
    We call the system \eqref{eq:carroll eqns i}, \textit{i.e.} 
    \begin{equation*}
    \left\lbrace\begin{aligned}
        &\partial_t (\beta \sigma) + \partial_x \sigma = 0, \\ 
        & \partial_t (\gamma^{-1} \sigma^\gamma + \beta^2 \sigma) + \partial_x (\beta \sigma) = 0
    \end{aligned}\right.
    \end{equation*}
    the \emph{isentropic Carrollian fluid equations}. 
\end{definition}
The discussion above implies that, for $C^1$ solutions of \eqref{eq:carroll eqns i}, the expression \eqref{isentropic_generalized_Carroll_pressure} for $\varpi$ trivially satisfies the Carrollian space-momentum equation with $\pi = 0$. From now on we therefore restrict our attention to the analysis of the isentropic system \eqref{eq:carroll eqns i}. The $C^1$ theory of the full non-isentropic system \eqref{eq:carroll_eqns_0} will be the subject of future investigations. 

\subsection{Notions of compression and rarefaction}\label{sec:compression rarefaction}

Before presenting the main results, we provide a definition of compression and rarefaction; this will make the content of our theorems more intuitive. The underlying idea is the notion that \emph{compression} ought to describe the tendency of characteristic curves to fall onto each other (and thus giving rise to a loss of regularity), while \emph{rarefaction} ought to describe the tendency of characteristic curves to move away from one another.  In this section we illustrate that compression and rarefaction are determined from the signs of the spatial derivatives of the eigenvalues.

Recall that, for a general hyperbolic system, the $i$-th Riemann invariant $w_i$ is constant along the $i$-th characteristic curve ($i=1,2$), and that there holds 
\begin{equation*}
   \left\lbrace \begin{aligned}
        &\partial_t w_1 + \lambda_2 \partial_x w_1 = 0, \\ 
        &\partial_t w_2 + \lambda_1 \partial_x w_2 = 0, 
    \end{aligned}\right. 
\end{equation*}
with $\lambda_j$ ($j=1,2$) the eigenvalues of the system (\textit{cf.}~\textit{e.g.}~\cite[\S 11.3.1, Theorem 1]{Evans}). Focusing on the set of characteristics associated to $w_1$ for the time being, the characteristic emanating from the point $x_0\in\mathbb{R}$, denoted by $X(t,x_0)$, satisfies 
\begin{equation}\label{eq:char curve illustration}
   \left\lbrace \begin{aligned}
        &\frac{\der X(t,x_0)}{\d t} = \lambda_2(t,X(t,x_0)), \\ 
        &X(0,x_0) = x_0. 
    \end{aligned} \right. 
\end{equation}
Let us assume that the characteristics intersect in finite time (\textit{i.e.}~compression): let $x_0 < y_0$ and suppose that $t_*$ is the first time at which the curves $X(t,x_0)$ and $X(t,y_0)$ intersect, \textit{i.e.}~we assume that 
\begin{equation}
\label{comp_rar_explanation_assumption}
    X(t,x_0) < X(t,y_0) \quad \text{for all } t \in [0,t_*) , \quad \text{and } \quad X(t_*,x_0) = X(t_*,y_0). 
\end{equation}
Using the Fundamental Theorem of Calculus and \eqref{eq:char curve illustration} to rewrite $X(t_*,x_0) = X(t_*,y_0)$ in terms of $\lambda_2$, we get 
\begin{equation*}
  - \int_0^{t_*} \Big( \lambda_2(s,X(s,y_0)) -\lambda_2(s,X(s,x_0)) \Big) \d s = y_0 - x_0 > 0. 
\end{equation*}
By the Mean Value Theorem, for all $s \in [0,t_*]$ there exists $\xi(s) \in (X(s,x_0),X(s,y_0))$ such that 
\begin{equation*}
    \lambda_2(s,X(s,y_0)) -\lambda_2(s,X(s,x_0)) = \lambda_{2x}(s,\xi(s)) \big( X(s,y_0) - X(s,x_0) \big), 
\end{equation*}
whence 
\begin{equation}\label{eq:characteristics cross illustration}
  - \int_0^{t_*} \lambda_{2x}(s,\xi(s)) \underbrace{\big( X(s,y_0) - X(s,x_0) \big)}_{>0 \text{ by \eqref{comp_rar_explanation_assumption}}} \d s > 0. 
\end{equation}
We deduce that if $\lambda_{2x} \geq 0$ everywhere in $[0,t_*]\times\mathbb{R}$ then we contradict \eqref{eq:characteristics cross illustration}. In turn, we see that compression arises where $\lambda_{2x} < 0$. An analogous computation for the second set of characteristic curves shows that compression also arises where $\lambda_{1x} < 0$. This motivates the following definition.

\begin{definition}[Compressive and Rarefactive Solutions]\label{def:compressive rarefactive} Let $\lambda_1\leq \lambda_2$ be the two eigenvalues of the system \eqref{eq:carroll eqns i}. A solution of \eqref{eq:carroll eqns i} at a point $(t,x)$ is called: 
\begin{enumerate}
    \item[(i)] Forward rarefactive (FR) if $\lambda_{2x}(t,x) \geq 0;$
    \item[(ii)] Backward rarefactive (BR) if $\lambda_{1x}(t,x) \geq 0;$
    \item[(iii)] Forward compressive (FC) if $\lambda_{2x}(t,x)<0;$
    \item[(iv)] Backward compressive (BC) if $\lambda_{1x}(t,x)<0.$
\end{enumerate}
We say that a solution is \emph{compressive} at a point $(t,x)$ if it is either FC or BC at $(t,x)$, and we say that a solution is \emph{rarefactive} at a point $(t,x)$ if it is either FR or BR at $(t,x)$. 
\end{definition}

\subsection{Main Results}\label{sec:main results}

We begin by introducing our notion of solution, which is that of a classical solution with $C^1$ regularity; such a solution satisfies the equations \eqref{eq:carroll eqns i} in the pointwise sense. 

\begin{definition}[$C^1$ Solution]
Let $T>0$ and $(\sigma_0,\beta_0) \in C^1(\mathbb{R})$ with $\sigma_0 \geq 0$ on $\mathbb{R}$. The pair $(\sigma,\beta)$ is called a \emph{$C^1$ solution to the Carrollian equations} \eqref{eq:carroll eqns i} on $(0,T)\times \mathbb{R}$ if: 
\begin{itemize}
    \item[(i)] $\sigma,\beta \in C^1([0,T)\times\mathbb{R})$ and $ \sigma \geq 0$ on $[0,T)\times\mathbb{R};$ 
    \item[(ii)] the equations \eqref{eq:carroll eqns i} are satisfied in the pointwise sense for all $(t,x) \in (0,T)\times \mathbb{R}.$
\end{itemize}
If, in addition, $(\sigma(0,x),\beta(0,x)) = (\sigma_0(x),\beta_0(x))$ for all $x \in \mathbb{R}$, then we say that $(\sigma,\beta)$ is a \emph{$C^1$ solution to the Cauchy problem with initial data $(\sigma_0,\beta_0)$}. If $(\sigma,\beta)$ is a $C^1$ solution on $(0,T)\times\mathbb{R}$ for all $T>0$, then we say it is a \emph{global $C^1$ solution}. 
\end{definition}

\begin{remark}
    Note that the choice to require $\sigma \geq 0$ in the above definition stems primarily from the need to make sense of the expression $\sigma^\gamma$ when $\gamma \notin \mathbb{Z}$. In the particular case of $\gamma = 3$ (the only \emph{odd} integer in the range $(1,3]$), note that this requirement is arbitrary in the sense that \eqref{eq:carroll eqns i} exhibits the symmetry $\sigma \mapsto - \sigma$. In the absence of physical intuition guiding the choice of sign (in contrast to the Galilean case), in this case a notion of solution with $\sigma \leq 0$ may be defined analogously.
\end{remark}

Our first main result is concerned with necessary and sufficient conditions for finite-time blow-up of classical solutions for the particular case $\gamma=3$. In terms of the terminology introduced in Definition \ref{def:compressive rarefactive}, this result states that a $C^1$ singularity forms in finite time if and only if the initial data are compressive somewhere.

\begin{theorem}[$C^1$ Blow-up Criterion for $\gamma=3$]\label{thm:blow up criterion gamma is 3}
    Let $(\sigma_0,\beta_0) \in L^\infty(\mathbb{R}) \cap C^1(\mathbb{R})$ satisfy 
    \begin{equation}\label{eq:initial data statement gamma is 3 blow up C1}
        \inf_{\mathbb{R}} (\sigma_0 - |\beta_0| ) > 0. 
    \end{equation}
    Then there exists a unique global $C^1$ solution $(\sigma,\beta)$ of \eqref{eq:carroll eqns i} with $\gamma=3$ and initial data $(\sigma_0,\beta_0)$ if and only if 
    \begin{equation}\label{eq:condition statement gamma is 3}
        (\beta_0 + \sigma_0)_x \leq 0 \quad \text{and} \quad ( \beta_0 - \sigma_0 )_x \leq 0 \quad \text{everywhere on } \mathbb{R}. 
        \end{equation}
    If condition \eqref{eq:condition statement gamma is 3} fails, then the solution ceases to be $C^1$ at the time 
   \begin{equation}\label{eq:blowup time gamma is 3 thm statement}
       T^* \defeq  \min\bigg\{ \inf_{\mathbb{R}}\frac{(\beta_0 + \sigma_0)^2}{(\beta_0+\sigma_0)_{x}} , ~ \inf_{\mathbb{R}} \frac{(\beta_0-\sigma_0)^2}{(\beta_0-\sigma_0)_{x}} \bigg\}. 
   \end{equation}
   Furthermore, for all $t \in [0,T^*)$, there hold the one-sided Lipschitz bounds 
   \begin{equation}\label{eq:one sided lip statement gamma is 3}
       \inf_{\mathbb{R}}(\beta \pm \sigma)_x (t,\cdot) \geq - \frac{\sup_{\mathbb{R}}(\sigma_0+|\beta_0|)^2}{t}. 
   \end{equation}
\end{theorem}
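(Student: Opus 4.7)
The plan is to apply the method of Lax from Section \ref{sec:lax background}, which in the present $\gamma = 3$ setting is made particularly transparent by a clean Burgers-type reduction of the Riemann invariant equations. From Section \ref{sec:hyp}, on the region $\{\sigma > |\beta|\}$ the two eigenvalues of the system are
\[ \lambda_1 = \tfrac{1}{\beta - \sigma} < 0 < \tfrac{1}{\beta + \sigma} = \lambda_2, \]
with associated Riemann invariants $R \defeq \beta + \sigma$ and $S \defeq \beta - \sigma$, conserved along the $\lambda_2$- and $\lambda_1$-characteristics respectively. Hypothesis \eqref{eq:initial data statement gamma is 3 blow up C1} together with the invariant region analysis of Section \ref{sec:invariant} ensures that $\sigma > |\beta|$ persists throughout the lifespan of any $C^1$ solution, so that both characteristic families remain smooth, transversal, and foliate the strip $[0,T) \times \mathbb{R}$. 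Combined with the local existence theory of Section \ref{sec:local}, it therefore suffices to show that $R_x$ and $S_x$ stay bounded (global existence) or blow up (singularity formation) in accordance with \eqref{eq:condition statement gamma is 3}.

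The crucial observation is that $\lambda_2 = 1/R$ depends only on the invariant $R$ itself, so on any $\lambda_2$-characteristic $X_+(\cdot,x_0)$ emanating from $(0, x_0)$ the value $R(t, X_+(t, x_0)) \equiv R_0(x_0)$ is constant. Differentiating the transport equation $R_t + (1/R) R_x = 0$ in $x$ and using this fact produces the closed Riccati ODE
\begin{equation*}
\frac{d}{dt} R_x(t, X_+(t,x_0)) = \frac{R_x^2(t, X_+(t, x_0))}{R_0(x_0)^2},
\end{equation*}
which integrates to
\begin{equation*}
R_x(t, X_+(t,x_0)) = \frac{R_0(x_0)^2 \, R_0'(x_0)}{R_0(x_0)^2 - t R_0'(x_0)},
\end{equation*}
and so blows up in finite time precisely when $R_0'(x_0) > 0$, at time $R_0(x_0)^2/R_0'(x_0)$. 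The identical analysis along the $\lambda_1$-characteristics $X_-(\cdot, x_0)$ yields $S_x$-blow-up precisely when $S_0'(x_0) > 0$, at time $S_0(x_0)^2/S_0'(x_0)$. Taking the infimum over $x_0$ and the minimum between the two families recovers \eqref{eq:blowup time gamma is 3 thm statement}; in the complementary case \eqref{eq:condition statement gamma is 3} the explicit formula shows that $|R_x|$ and $|S_x|$ stay bounded by their initial sup norms for all time, so that a standard continuation argument based on the local theory of Section \ref{sec:local} delivers a global $C^1$ solution.

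The one-sided Lipschitz bound \eqref{eq:one sided lip statement gamma is 3} is extracted from the same explicit formula: the map $a \mapsto R_0^2 a / (R_0^2 - t a)$ is increasing on $(-\infty, R_0^2/t)$ and tends to $-R_0^2/t$ as $a \to -\infty$, so along every $\lambda_2$-characteristic one has $R_x(t, \cdot) \geq -R_0(x_0)^2/t$ regardless of the sign and size of $R_0'(x_0)$. An identical bound holds for $S_x$, and the pointwise estimate $|R_0|, |S_0| \leq \sigma_0 + |\beta_0|$ then yields \eqref{eq:one sided lip statement gamma is 3}. The principal obstacle in executing this plan is controlling the hyperbolicity: the eigenvalues $\lambda_1$ and $\lambda_2$ coalesce on the loci $\{\beta = \pm \sigma\}$, and should a characteristic reach such a locus both the Riemann invariant framework and the Riccati reduction would collapse. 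This is precisely what the invariant region analysis of Section \ref{sec:invariant} rules out, which also explains the quantitative appearance of $\sigma_0 + |\beta_0|$ in \eqref{eq:one sided lip statement gamma is 3}.
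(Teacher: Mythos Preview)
Your proposal is correct and follows essentially the same approach as the paper: both exploit the decoupling of the Riemann invariant equations at $\gamma=3$ (so that $\lambda_2=1/R$ depends only on $R$ and $\lambda_1=1/S$ only on $S$), derive the closed Riccati ODE for $R_x$ and $S_x$ along their respective characteristics, and read off the blow-up time and one-sided Lipschitz bound from the explicit solution. The only cosmetic difference is that the paper integrates $1/R_x$ rather than $R_x$, but the content is identical.
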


We emphasise that the theorem above implies that, given generic initial data satisfying $\inf_{\mathbb{R}} (\sigma_0 - |\beta_0| ) > 0$ but whose derivatives do not satisfy the sign requirements \eqref{eq:condition statement gamma is 3}, the solution ceases to be $C^1$ in finite time. Furthermore, given the invariant regions of \S \ref{sec:invariant}, such blow-up occurs without the formation of \emph{Carrollian liquescence} ($\sigma = 0$). Moreover, we will see in \S \ref{sec:invariant} that the initial condition \eqref{eq:initial data statement gamma is 3 blow up C1} implies that $\inf_\mathbb{R}(\sigma(t,\cdot) - |\beta(t,\cdot)| ) > 0$ for all subsequent times $t>0$, which preserves the strict hyperbolicity of the system; \textit{cf.}~\S \ref{sec:hyp}. The condition for non-blow-up given in \eqref{eq:condition statement gamma is 3} is equivalent to the \emph{everywhere rarefactive} condition of \Cref{def:compressive rarefactive}; see the expression for the eigenvalues in \eqref{eq:eigenvalues general case}.

For the general case $\gamma \in (1,3)$, we are also able to establish necessary and sufficient conditions for $C^1$ blow-up in finite-time. The following quantity will appear recurrently in our analysis 
\begin{equation}\label{eq:theta def}
    \theta \defeq  \frac{\gamma-1}{2}; 
\end{equation}
note that $\theta \in (0,1]$ for our admissible range of $\gamma$. We introduce the functions of the phase space variables 
\begin{equation}\label{eq:riemann invariant functions intro}
    w_1(\sigma,\beta) \defeq \beta + \frac{\sigma^\theta}{\theta}, \quad w_2(\sigma,\beta) \defeq \beta - \frac{\sigma^\theta}{\theta}; 
\end{equation}
we show in \S \ref{sec:hyp} that these are Riemann invariants of the system \eqref{eq:carroll eqns i}. In the remainder of the paper we will use the notation $w_j(0,\cdot) = w_j(\sigma_0,\beta_0)$ and $w_{jx} = \partial_x w_j$  ($j=1,2$). Once again, in terms of the terminology introduced in Definition \ref{def:compressive rarefactive}, our main result for the $\gamma \in (1,3)$ case states that a $C^1$ singularity forms in finite time if and only if the initial data are compressive somewhere. 

\begin{theorem}[$C^1$ Blow-up Criterion for $\gamma \in (1,3)$]\label{thm:blow up criterion general gamma}
    Let $(\sigma_0,\beta_0) \in L^\infty(\mathbb{R}) \cap C^1(\mathbb{R})$ satisfy the conditions\footnote{Note that this imposes $\inf_\mathbb{R} \sigma_0 > 0$.} 
    \begin{equation}\label{eq:initial data statement gamma general blow up C1}
    \inf_\mathbb{R} w_1(0,\cdot) > 0 > \sup_\mathbb{R} w_2(0,\cdot), 
\end{equation}
and 
\begin{align} \begin{split} \label{eq:initial data statement gamma general blow up C1 second}
        \big(\sup_\mathbb{R} w_1(0,\cdot)-\theta \inf_\mathbb{R} w_1(0,\cdot)\big) + (1+\theta)\sup_\mathbb{R} w_2(0,\cdot) &< 0 , \\
        (1+\theta)\inf_\mathbb{R}w_1(0,\cdot) - \big( \theta \sup_\mathbb{R} w_2(0,\cdot)-\inf_\mathbb{R}w_2(0,\cdot) \big) & > 0. 
        \end{split}
\end{align}
Then there exists a unique global $C^1$ solution $(\sigma,\beta)$ of \eqref{eq:carroll eqns i} with $\gamma \in (1,3)$ and initial data $(\sigma_0,\beta_0)$ \hspace{.5mm} if and only if 
    \begin{equation} \label{eq:condition statement gamma general}
        w_{1x}(0,x) \leq 0 \quad \text{and} \quad w_{2x}(0,x) \leq 0 \quad \text{for all } x \in \mathbb{R}. 
        \end{equation}
        If condition \eqref{eq:condition statement gamma general} fails, then, by denoting by $T^*$ the smallest time at which the solution ceases to be $C^1$, there hold for all $t \in [0,T^*)$ the one-sided Lipschitz bounds 
        \begin{equation}\label{eq:one sided lipschitz gamma general}
            \inf_{\mathbb{R}} w_{jx}(t,\cdot) \geq - \frac{C}{t} \qquad (j=1,2), 
        \end{equation}
        with the constant $C>0$ depending only on the initial data. 
\end{theorem}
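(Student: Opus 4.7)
The plan is to implement the classical method of Lax in Riemann-invariant form. By \S\ref{sec:hyp}, the system \eqref{eq:carroll eqns i} diagonalizes along characteristics as
\begin{align*}
    \partial_t w_1 + \lambda_2 \partial_x w_1 = 0, \qquad \partial_t w_2 + \lambda_1 \partial_x w_2 = 0,
\end{align*}
and under the hypotheses \eqref{eq:initial data statement gamma general blow up C1}--\eqref{eq:initial data statement gamma general blow up C1 second} the invariant-region arguments of \S\ref{sec:invariant} trap $(w_1, w_2)(t, \cdot)$ inside a compact set $\mathcal{R}$ strictly separated from the Carrollian liquescence $\{\sigma = 0\}$ and the degeneracy loci $\{\beta = \pm \sigma^\theta\}$ for as long as a $C^1$ solution exists. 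On $\mathcal{R}$, both $\lambda_2 - \lambda_1$ and the eigenvalue derivatives witnessing genuine nonlinearity are bounded above and below by strictly positive constants.

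Setting $D_\pm := \partial_t + \lambda_{2,1}\partial_x$ and differentiating the diagonalized system in $x$, the chain rule produces evolution equations of the form
\begin{align*}
    D_+ w_{1x} &= a(w_1, w_2)\, w_{1x}^2 + b(w_1, w_2)\, w_{1x} w_{2x}, \\
    D_- w_{2x} &= c(w_1, w_2)\, w_{2x}^2 + d(w_1, w_2)\, w_{1x} w_{2x}.
\end{align*}
The Lax integrating-factor trick eliminates the cross terms: one constructs smooth, strictly positive weights $h_j = h_j(w_1, w_2)$ as solutions of appropriate first-order linear PDEs on phase space (of the schematic form $(\lambda_2 - \lambda_1)\,\partial_{w_2} h_1 \propto h_1\, b$, and symmetrically for $h_2$), whereupon the rescaled quantities $P := h_1 w_{1x}$ and $Q := h_2 w_{2x}$ satisfy the decoupled Riccati equations
\begin{align*}
    D_+ P = \Phi_1(w_1, w_2)\, P^2, \qquad D_- Q = \Phi_2(w_1, w_2)\, Q^2,
\end{align*}
in which $h_j, \Phi_j$ are of definite sign on $\mathcal{R}$ and $|h_j|, |\Phi_j|$ are bounded uniformly above and below by strictly positive constants there.

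The remainder of the proof is ODE analysis along characteristics. Integration gives $P(t)^{-1} = P(0)^{-1} - \int_0^t \Phi_1\,ds$ and analogously for $Q$. Under the rarefactive hypothesis \eqref{eq:condition statement gamma general}, $P(0,\cdot)$ and $Q(0,\cdot)$ have the sign that keeps $P^{-1}$ and $Q^{-1}$ from crossing zero in finite time, producing uniform-in-time $L^\infty$ bounds on $w_{1x}, w_{2x}$. Combined with the $L^\infty$ bounds on $(\sigma, \beta)$ from $\mathcal{R}$, the continuation criterion of \S\ref{sec:local} then extends the local solution globally. If instead \eqref{eq:condition statement gamma general} fails at some $x_\star \in \mathbb{R}$, then along the characteristic through $x_\star$ the inverse $P^{-1}$ (or $Q^{-1}$) starts with the opposite sign and is forced by $|\Phi_j| \geq \Phi_{j,\min} > 0$ to cross zero in finite time, triggering the predicted $C^1$ blow-up. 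The one-sided Lipschitz bound \eqref{eq:one sided lipschitz gamma general} is the universal Lax--Oleinik fingerprint of the Riccati: for any $P(0) \in \mathbb{R}$ the explicit formula $P(t) = (P(0)^{-1} - \int_0^t \Phi_1)^{-1}$ gives $P(t) \geq -1/(\Phi_{1,\min}\, t)$ for all $t \in (0, T^*)$, which upon division by the uniformly bounded weight $h_1$ yields $\inf_\mathbb{R} w_{1x}(t, \cdot) \geq -C/t$, and identically for $w_{2x}$.

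The principal technical obstacle is the construction of the integrating factors $h_j$ with the required smoothness and sign properties across the entirety of $\mathcal{R}$. The defining phase-space ODEs can be integrated from the known dependence of $\lambda_{1,2}$ on $(w_1, w_2)$, but their solvability requires $\lambda_2 - \lambda_1$ to remain uniformly away from zero on $\mathcal{R}$. This is precisely what the delicate compatibility conditions \eqref{eq:initial data statement gamma general blow up C1 second} are engineered to guarantee, by keeping $\mathcal{R}$ uniformly strictly inside the hyperbolic regime and so preventing the integrating-factor ODE from becoming singular. By contrast with the $\gamma = 3$ case of Theorem \ref{thm:blow up criterion gamma is 3}, where the Riemann invariants are already sufficiently decoupled that Riccati equations emerge directly, for $\gamma \in (1,3)$ the two-step decoupling is unavoidable and the invariant-region tracking is the crux of the proof.
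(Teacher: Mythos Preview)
Your proposal is correct and follows essentially the same approach as the paper: the invariant-region control from \S\ref{sec:invariant}, the Lax integrating-factor reduction of the differentiated diagonal system to decoupled Riccati equations along characteristics, and the ensuing dichotomy between global boundedness (rarefactive case) and finite-time blow-up (compressive case), together with the one-sided Lipschitz bound, are exactly what the paper does. The only difference is that the paper computes the integrating factors $h_j$ and the Riccati coefficients explicitly (e.g.\ $h_1 = -\tfrac{1-\theta}{2\theta}\log(w_1-w_2) - \log((1+\theta)w_1 + (1-\theta)w_2)$), whereas you characterise them by the properties they must satisfy; both routes rely on precisely the same sign and boundedness information furnished by \eqref{eq:initial data statement gamma general blow up C1 second}.
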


We note in passing that the condition \eqref{eq:initial data statement gamma is 3 blow up C1} is equivalent to \eqref{eq:initial data statement gamma general blow up C1} for $\gamma = 3$. In the particular case $\gamma=3$, the additional assumption \eqref{eq:initial data statement gamma general blow up C1 second} is not required; the condition \eqref{eq:initial data statement gamma general blow up C1 second} on the initial data is required for $\gamma \in (1,3)$ to preserve the strict hyperbolicity of the system. Additionally, a characterisation of the first blow-up time for general $\gamma \in (1,3)$ when condition \eqref{eq:condition statement gamma general} fails is given in \S \ref{sec:blow up}; see equations \eqref{eq:blow up time bounds w1x} and \eqref{eq:blow up time bounds w2x}. We note also that the condition for non-blow-up given in \eqref{eq:condition statement gamma general} is equivalent to the \emph{everywhere rarefactive} condition of \Cref{def:compressive rarefactive}.

\subsection{Background on the Lax method for finite-time blow-up}\label{sec:lax background}

In the interest of completeness, in this section we give a brief summary of Lax's argument (\textit{cf.}~\cite{Lax}) for finite-time blow-ups in general strictly hyperbolic systems. The same essential strategy is used in \S \ref{sec:gamma 3} and \S \ref{sec:blow up} to obtain blow-ups for \eqref{eq:carroll eqns i}. 

Consider the system of conservation laws 
\begin{equation}\label{eq:generic system}
\begin{cases}
u_t + f(u,v)_x = 0, \\
v_t + g(u,v)_x = 0.
\end{cases}
\end{equation}
In what follows, we provide a formal argument yielding the finite-time blow-up of \eqref{eq:generic system}. The argument proceeds by obtaining a Riccati-type equation for the spatial derivatives of the Riemann invariants and therefore showing that these derivatives explode in finite-time; we divide the argument into three steps. 

\smallskip 

\noindent 1. \textit{Evolution of the Riemann invariants}. Provided the solution $(u,v)$ of \eqref{eq:generic system} is $C^1$, one may apply the chain rule to rewrite \eqref{eq:generic system} as 
$$\mathbf{v}_t+ \mathbf{A} \mathbf{v}_x = 0,$$
where $\mathbf{v}=(u,v)^\intercal$ and $\mathbf{A}$ is the Jacobian matrix \[\mathbf{A} = \begin{pmatrix} f_u & f_v \\ g_u & g_v \end{pmatrix}.\]
Provided the system is strictly hyperbolic, the matrix $\mathbf{A}$ admits two distinct real eigenvalues, which we denote by $\lambda$ and $\mu$. Correspondingly, we denote by $\mathbf{r}_\lambda, \mathbf{r}_\mu$ the right-eigenvectors of $\mathbf{A}$ associated to the eigenvalues $\lambda,\mu$. Assuming moreover the existence of Riemann invariants $z$ and $w$, which are functions of the phase space variables $(u,v)$ that satisfy 
\begin{equation*}
    \nabla_{(u,v)} w \cdot \mathbf{r}_\lambda = 0, \quad \nabla_{(u,v)} z \cdot \mathbf{r}_\mu = 0, 
\end{equation*}
one recovers (\textit{cf.}~\textit{e.g.}~\cite[\S 11.3.1, Theorem 1]{Evans}) the diagonalised system 
\begin{equation}\label{eq:RI general lax}
    \left\lbrace\begin{aligned}
        &z_t + \lambda z_x = 0, \\ 
        &w_t + \mu w_x = 0. 
    \end{aligned}\right.
\end{equation}
Following Lax's notation, we define the operators 
\[ \prime \defeq  \partial_t + \lambda \partial_x \quad \text{and} \quad \backprime\defeq \partial_t + \mu \partial_x. \] 
In this notation \eqref{eq:RI general lax} becomes
\begin{equation*}
    z' = w^\backprime = 0. 
\end{equation*}

\smallskip 

\noindent 2. \textit{Riccati-type equation for $z_x$}. By differentiating the equation for $z$ in \eqref{eq:RI general lax} with respect to $x$ and setting $\alpha = z_x$, we obtain 
\[  \alpha^\prime + \lambda_z \alpha^2 + \lambda_w w_x \alpha = 0.         \]Meanwhile, the equation for $w$ yields $0=w^\backprime = w^\prime + (\mu- \lambda) w_x$, from which we obtain the expression 
\begin{equation*}
    w_x =\frac{w^\prime}{\lambda- \mu}. 
\end{equation*}
Substituting into the equation for $\alpha$, we find 
\begin{equation} \label{argumentlax}  \alpha^{\prime} + \lambda_{z} \alpha^{2} + \Big(\frac{ \lambda_w}{\lambda- \mu}w^{\prime}\Big) \alpha =0.       \end{equation}
\vspace{3mm} 
We now obtain a Riccati equation for $\alpha$ by introducing a function $h$ satisfying \[ h_w = \frac{\lambda_w}{\lambda -\mu}; \]
for illustrative purposes we assume here that such a function $h$ exists. Then \eqref{argumentlax} rewrites as \[  \alpha^\prime + h^\prime \alpha + \lambda_z \alpha^2  =0,    \]\textit{i.e.} by setting $\tilde{\alpha} = e^h \alpha, a = e^{-h} \lambda_z$ we get the aforementioned Riccati-type equation 
\be \label{riccati eq} 
\tilde{\alpha}^\prime = - a \hspace{.5mm} \tilde{\alpha}^2. 
\ee

\smallskip 

\noindent 3. \textit{Blow-up along characteristic}. We employ a classical ODE argument to give conditions for the finite-time blow-up of \eqref{riccati eq}. Let $x^1(t)$ be the characteristic vector field associated to the initial-value problem 
\[  \left\lbrace \begin{aligned} &\frac{\text{d} x^1(t)}{\text{d} t} = \lambda,\\ 
&x^1(0) = x_0; \end{aligned}\right.   \]
we formally assume that such a $x^1$ exists. By solving \eqref{riccati eq} along this characteristic, we obtain
\[ \frac{1}{\tilde{\alpha}(t,x^1(t))} = \frac{1}{\tilde{\alpha}(0,x_0)}+ \int_0^t  a(s, x^1(s)) \d s.    \]
Provided there exists $t_* \in (0,\infty)$ such that \[ \int_0^{t_*} a(\sigma,x^1(\sigma)) \hspace{.5mm} \text{d}\sigma = - \frac{1}{\tilde{\alpha}(0,x_0)},      \]
$\limsup_{t \uparrow t_*}|\tilde{\alpha}(t,x^1(t))| = \infty$ and the solution ceases to be continuously differentiable in finite time. An identical analysis can be performed on the derivative of the other Riemann invariant, $w_x$, to again establish its blow-up at a finite time $t_*$. If such a finite $t_*$ does not exist for neither $z_x$ nor $w_x$, then the Riemann invariants are $C^1$ for all time and the solution is global.

\section{Hyperbolicity and Genuine Nonlinearity}\label{sec:hyp}

We recast the governing equations into a first-order hyperbolic system; for $\gamma=3$, the system can be written in conservative form with respect to the unknown vector $(\sigma,\beta)$, while for $\gamma \in (1,3)$ this is not the case (\textit{cf.}~\cite[\S 2.2]{Carrollcomp}). By applying the chain rule and defining $\bu \defeq  (\sigma,\beta)$, we find that \eqref{eq:carroll eqns i} may be rewritten as 
\begin{equation*}
    \bu_t + \frac{1}{\beta^2 - \sigma^{\gamma-1}}\left( \begin{matrix}
        \beta & -\sigma \\ 
        -\sigma^{\gamma-2}  & \beta
    \end{matrix} \right)  \bu_x = 0, 
\end{equation*}
\textit{i.e.} 
\begin{equation}\label{eq:non conserv hyp}
    \bu_t + M^{-1}  \bu_x = 0, 
\end{equation}
where 
\begin{equation*}
    M = \left(\begin{matrix} \beta & \sigma \\ \sigma^{\gamma-2} & \beta \end{matrix} \right). 
\end{equation*}
Recall that the system \eqref{eq:non conserv hyp}, which is equivalent to \eqref{eq:carroll eqns i} for $C^1$ solutions, is said to be \emph{strictly hyperbolic} if $M^{-1}$ admits two distinct real eigenvalues. We have the following result. 

\begin{lemma}[Hyperbolicity and Riemann invariants]\label{lem:hyp}
    The system \eqref{eq:carroll eqns i} is strictly hyperbolic in the region 
    \begin{equation*}
       \mathcal{H} \defeq  \Big\{ (\sigma,\beta) \in (0,\infty)\times\mathbb{R} : \, \beta \neq \pm \sigma^\theta, \, \sigma \neq 0 \Big\}. 
    \end{equation*}
Furthermore, the system is endowed with the Riemann invariants given in \eqref{eq:riemann invariant functions intro}, \textit{i.e.} 
    \begin{equation*}
    w_1 = \beta + \frac{\sigma^\theta}{\theta}, \quad w_2 = \beta - \frac{\sigma^\theta}{\theta}. 
\end{equation*}
\end{lemma}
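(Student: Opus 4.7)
The plan is a direct verification by spectral computation of the matrix appearing in \eqref{eq:non conserv hyp}. First I would observe that, since the eigenvalues of $M^{-1}$ are reciprocals of those of $M$, and since $M$ and $M^{-1}$ share the same eigenvectors, it is more convenient to work with
\[
M = \begin{pmatrix} \beta & \sigma \\ \sigma^{\gamma-2} & \beta \end{pmatrix}.
\]
A short calculation gives the characteristic polynomial $(\beta - \lambda)^2 - \sigma^{\gamma-1} = 0$, hence the eigenvalues of $M$ are
\[
\lambda_\pm^M = \beta \pm \sigma^{\theta}, \qquad \theta = \tfrac{\gamma-1}{2},
\]
and correspondingly the eigenvalues of $M^{-1}$ are $\lambda_\pm = (\beta \pm \sigma^\theta)^{-1}$.

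Next I would argue that in the region $\mathcal{H}$ the two quantities $\sigma^\theta$ (well-defined for $\sigma > 0$ and any $\gamma \in (1,3]$) and $\beta \pm \sigma^\theta$ are all nonzero, so that $\lambda_\pm$ are real, finite, and distinct; this is precisely the strict hyperbolicity claim. One should also remark that the very form \eqref{eq:non conserv hyp} of the system requires $\beta^2 \neq \sigma^{\gamma-1}$, a condition equivalent to $\beta \neq \pm \sigma^\theta$ and therefore built into the definition of $\mathcal{H}$; similarly the condition $\sigma > 0$ rules out the Carrollian liquescence locus at which the eigenvectors below become undefined.

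For the Riemann invariants, I would compute the right eigenvectors of $M$. Solving $(M - \lambda_\pm^M I)r_\pm = 0$ gives, up to scaling,
\[
r_+ = \begin{pmatrix} 1 \\ \sigma^{\theta-1} \end{pmatrix}, \qquad r_- = \begin{pmatrix} 1 \\ -\sigma^{\theta-1} \end{pmatrix}.
\]
With $\nabla_{(\sigma,\beta)} w_1 = (\sigma^{\theta-1}, 1)$ and $\nabla_{(\sigma,\beta)} w_2 = (-\sigma^{\theta-1}, 1)$, one checks immediately that
\[
\nabla w_1 \cdot r_- = \sigma^{\theta-1} - \sigma^{\theta-1} = 0, \qquad \nabla w_2 \cdot r_+ = -\sigma^{\theta-1} + \sigma^{\theta-1} = 0,
\]
which identifies $w_1$ and $w_2$ as the Riemann invariants in the sense recalled in \S \ref{sec:lax background}, transported at speeds $\lambda_+$ and $\lambda_-$ respectively.

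There is no genuine obstacle here: the result is a straightforward linear algebra computation, and the only point demanding mild care is checking which regions of the phase plane must be excluded in order that $\sigma^\theta$, $\sigma^{\gamma-2}$, and the eigenvalues be simultaneously well-defined, finite, real and distinct, which is exactly what the definition of $\mathcal{H}$ encodes.
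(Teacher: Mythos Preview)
Your proposal is correct and follows essentially the same approach as the paper: compute the eigenvalues and eigenvectors of $M$, pass to $M^{-1}$ by reciprocals, and verify the orthogonality $\nabla w_j \cdot \br_j = 0$ directly. The only cosmetic difference is labeling (your $r_\pm$, $\lambda_\pm$ are the paper's $\br_2, \br_1$ and $\lambda_2, \lambda_1$), and you add a slightly fuller discussion of why the excluded loci in $\mathcal{H}$ are exactly those needed for the computation to make sense.
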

\begin{proof}
\noindent 1. \textit{Hyperbolicity}. We begin by computing the eigenvalues of $M$, which are given by
\begin{equation*}
    \mu_1 = \beta - \sigma^{\theta}, \quad \mu_2 = \beta + \sigma^{\theta},
\end{equation*}
where $\theta$ was defined in \eqref{eq:theta def}. Furthermore, a set of (non-normalised) right-eigenvectors for $M$ is 
\begin{equation}\label{eq:eigenvectors general case}
    \br_1 = \left( \begin{matrix}
        1 \\ 
        -\sigma^{\theta-1}
    \end{matrix} \right)    , \quad \br_2 =  \left( \begin{matrix}
        1 \\ 
        \sigma^{\theta-1}
    \end{matrix} \right). 
\end{equation}
Elementary manipulations show that the matrix $M^{-1}$ has right-eigenvectors $\br_1,\br_2$, with corresponding eigenvalues given by 
\begin{equation}\label{eq:eigenvalues general case}
    \lambda_1 = \frac{1}{\beta - \sigma^\theta}   , \quad \lambda_2 =  \frac{1}{\beta+\sigma^\theta}, 
\end{equation}
and the first part of the result follows. 

\smallskip 

\noindent 2. \textit{Riemann invariants}. Direct computation shows that the formulas given by \eqref{eq:riemann invariant functions intro} satisfy 
\begin{equation*}
    \nabla_\bu w_j \cdot \br_j = 0 \qquad (j=1,2), 
\end{equation*}
which proves the second part of the lemma. 
\end{proof}

We also show that the system is genuinely nonlinear in the region of the phase space of interest. 

\begin{lemma}[Genuine Nonlinearity]
    The system \eqref{eq:carroll eqns i} is genuinely nonlinear in the region 
    \begin{equation*}
        \mathcal{H} =  \Big\{ (\sigma,\beta) \in (0,\infty)\times\mathbb{R} : \, \beta \neq \pm \sigma^\theta, \, \sigma \neq 0  \Big\}. 
    \end{equation*}
\end{lemma}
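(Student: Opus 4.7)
The plan is to verify genuine nonlinearity in the standard sense, \emph{i.e.}~that $\nabla_\bu \lambda_j \cdot \br_j \neq 0$ throughout $\mathcal{H}$ for each $j=1,2$, using the explicit expressions for the eigenvalues and right-eigenvectors furnished by the previous lemma.

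First I would compute the gradients of the eigenvalues \eqref{eq:eigenvalues general case} with respect to $\bu = (\sigma,\beta)$. A direct differentiation gives
\begin{equation*}
    \nabla_\bu \lambda_1 = \left( \frac{\theta \sigma^{\theta-1}}{(\beta - \sigma^\theta)^2}, \; \frac{-1}{(\beta - \sigma^\theta)^2} \right), \qquad \nabla_\bu \lambda_2 = \left( \frac{-\theta \sigma^{\theta-1}}{(\beta + \sigma^\theta)^2}, \; \frac{-1}{(\beta + \sigma^\theta)^2} \right).
\end{equation*}
Pairing these with the right-eigenvectors from \eqref{eq:eigenvectors general case}, I would then obtain
\begin{equation*}
    \nabla_\bu \lambda_1 \cdot \br_1 = \frac{(1+\theta)\sigma^{\theta-1}}{(\beta - \sigma^\theta)^2}, \qquad \nabla_\bu \lambda_2 \cdot \br_2 = - \frac{(1+\theta)\sigma^{\theta-1}}{(\beta + \sigma^\theta)^2}.
\end{equation*}

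Finally, since $\theta = (\gamma-1)/2 > 0$ for $\gamma \in (1,3]$, the factor $1+\theta$ is strictly positive; on $\mathcal{H}$ we have $\sigma > 0$ (so $\sigma^{\theta-1}$ is well-defined and positive) and $\beta \neq \pm \sigma^\theta$ (so the denominators do not vanish). Hence both quantities are nonzero throughout $\mathcal{H}$, which is precisely the definition of genuine nonlinearity for a strictly hyperbolic $2 \times 2$ system.

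There is no real obstacle here: once the eigenvalues and eigenvectors of the previous lemma are in hand, the verification is a direct computation, and the only point worth emphasising is that the loci $\{\sigma = 0\}$ and $\{\beta = \pm \sigma^\theta\}$ excluded in the definition of $\mathcal{H}$ are exactly the ones where either strict hyperbolicity or genuine nonlinearity would degenerate.
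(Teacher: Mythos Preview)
Your proposal is correct and follows essentially the same approach as the paper: both compute $\nabla_\bu \lambda_j$ explicitly, pair with the eigenvectors $\br_j$, and obtain the identical expressions $\pm (1+\theta)\sigma^{\theta-1}/(\beta \mp \sigma^\theta)^2$, which are manifestly nonzero on $\mathcal{H}$.
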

\begin{proof}
    We verify the condition $\nabla_{(\sigma,\beta)} \lambda_j \cdot \mathbf{r}_j \neq 0$ ($j=1,2$). Direct computation from \eqref{eq:eigenvalues general case} yields 
    \begin{equation*}
        \nabla_{(\sigma,\beta)}\lambda_1 = \frac{1}{(\beta - \sigma^\theta)^2}\left( \begin{matrix}
        \theta \sigma^{\theta-1} \\ 
        -1
    \end{matrix} \right) , \quad \nabla_{(\sigma,\beta)}\lambda_2 = \frac{1}{(\beta + \sigma^\theta)^2}\left( \begin{matrix}
        -\theta \sigma^{\theta-1} \\ 
        -1
    \end{matrix} \right), 
    \end{equation*}
    whence, using also \eqref{eq:eigenvectors general case}, we get 
    \begin{equation*}
        \nabla_{(\sigma,\beta)}\lambda_1 \cdot \mathbf{r}_1 = \frac{(1+\theta)}{(\beta - \sigma^\theta)^2}\sigma^{\theta-1} > 0 > -\frac{(1+\theta)}{(\beta + \sigma^\theta)^2}\sigma^{\theta-1} = \nabla_{(\sigma,\beta)}\lambda_2 \cdot \mathbf{r}_2 
    \end{equation*}
in $\mathcal{H}$, and the result follows. 
\end{proof}

We conclude this section by noting that, provided $(\sigma,\beta)$ is $C^1$ and $|\sigma|>0$, the system \eqref{eq:carroll eqns i} may be rewritten in Riemann invariant coordinates in the diagonalised form: 
\begin{equation}\label{eq:riemann invariants eqns after hyp}
   \left\lbrace \begin{aligned}
        &\partial_t w_1 + \lambda_2 \partial_x w_1 = 0, \\ 
        &\partial_t w_2 + \lambda_1 \partial_x w_2 = 0, 
    \end{aligned}\right. 
\end{equation}
with $\lambda_j$ ($j=1,2$) as per \eqref{eq:eigenvalues general case}; \textit{cf.}~\textit{e.g.}~\cite[\S 11.3.1, Theorem 1]{Evans}.

\section{Invariant Regions and Local Well-Posedness}\label{sec:invariant and local}

This section is devoted to establishing the local-in-time existence and uniqueness of $C^1$ solutions of \eqref{eq:carroll eqns i}, and to computing invariant regions of the phase space to which these solutions are constrained. We begin with the invariant regions in \S \ref{sec:invariant} below, and move on to the local-in-time well-posedness in \S \ref{sec:local}. 

\subsection{Invariant Regions}\label{sec:invariant}

In this section, we establish the regions of the phase space to which solutions are constrained, given admissible initial data. This analysis is required since the eigenvalues \eqref{eq:eigenvalues general case} are not well-defined along the curves $\{\beta = \pm \sigma^\theta\}$, whence one loses the hyperbolicity of the system; hyperbolicity is also lost on the locus $\{\sigma = 0 \}$. We will show that neither of these occur provided the initial data satisfies the assumptions outlined in Theorems \ref{thm:blow up criterion gamma is 3} and \ref{thm:blow up criterion general gamma}. Throughout this subsection, we assume the existence and uniqueness of a local-in-time $C^1$ solution; this is justified in \S \ref{sec:local}. 

\smallskip

The main result of this section is as follows; in order to simplify notation, we write, with a slight abuse of notation, $w_j(t,x)$ in place of $w_j(\sigma(t,x),\beta(t,x))$ ($j=1,2$).

\begin{prop}[Invariant Regions for $\gamma \in (1,3)$]\label{prop:invariant regions}
Let $\gamma \in (1,3)$ and $(\sigma,\beta)$ be a $C^1$ solution on $[0,T)\times\mathbb{R}$ with initial data $(\sigma_0,\beta_0)$ satisfying \eqref{eq:initial data statement gamma general blow up C1}, \textit{i.e.} 
\begin{equation*}
    \inf_\mathbb{R} w_1(0,\cdot) > 0 > \sup_\mathbb{R} w_2(0,\cdot), 
\end{equation*}
which imposes $\inf_\mathbb{R} \sigma_0 > 0$, and \eqref{eq:initial data statement gamma general blow up C1 second}, \textit{i.e.}
\begin{equation*}
       \begin{aligned}
           &\big(\sup_\mathbb{R} w_1(0,\cdot)-\theta \inf_\mathbb{R} w_1(0,\cdot)\big) + (1+\theta)\sup_\mathbb{R} w_2(0,\cdot) < 0, \\ 
           &(1+\theta)\inf_\mathbb{R}w_1(0,\cdot) - \big( \theta \sup_\mathbb{R} w_2(0,\cdot)-\inf_\mathbb{R}w_2(0,\cdot) \big) > 0. 
       \end{aligned} 
\end{equation*}
Then there holds $(j=1,2)$ 
\begin{equation}\label{eq:riemann invariants max min pple}
    \inf_{\mathbb{R}} w_j(0,\cdot) \leq w_j(t,x) \leq \sup_\mathbb{R} w_j(0,\cdot) \quad \text{for all } (t,x) \in [0,T)\times\mathbb{R}, 
\end{equation}
as well as 
\begin{equation}\label{eq:sigma cone}
  \sigma^\theta(t,x) - |\beta(t,x)| > 0 \quad \text{for all } (t,x) \in [0,T)\times\mathbb{R}. 
\end{equation}
\end{prop}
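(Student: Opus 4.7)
The heart of the argument is a continuation/bootstrap scheme tied to the diagonalized system \eqref{eq:riemann invariants eqns after hyp}. The plan is to set
\[
T_0 \defeq \sup\bigl\{\, t \in [0,T) : \sigma^\theta(s,x) - |\beta(s,x)| > 0 \text{ for all } (s,x) \in [0,t] \times \mathbb{R}\,\bigr\},
\]
note that $T_0 > 0$ since the hypothesis \eqref{eq:initial data statement gamma general blow up C1 second} together with the identities $\sigma^\theta = \tfrac{\theta}{2}(w_1 - w_2)$ and $\beta = \tfrac{1}{2}(w_1 + w_2)$ force $\sigma_0^\theta > |\beta_0|$ strictly, and then show that $T_0 = T$ by contradiction.

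On $[0,T_0)$ the system is strictly hyperbolic by Lemma~\ref{lem:hyp}, so the transport equations \eqref{eq:riemann invariants eqns after hyp} are valid and the characteristics of $\lambda_1,\lambda_2$ (which are $C^1$ in $(t,x)$ since $(\sigma,\beta) \in C^1$) are globally defined by Picard--Lindel\"of. Since $w_1$ is constant along integral curves of $\partial_t + \lambda_2 \partial_x$ and $w_2$ is constant along those of $\partial_t + \lambda_1 \partial_x$, the method of characteristics yields \eqref{eq:riemann invariants max min pple}. This already proves the first claim of the proposition conditional on $T_0 = T$.

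Next, I would translate these scalar bounds into a strict positive lower bound on $\sigma^\theta - |\beta|$, using again
\[
\sigma^\theta - \beta = -\tfrac{1}{2}\bigl[(1-\theta) w_1 + (1+\theta) w_2\bigr], \qquad \sigma^\theta + \beta = \tfrac{1}{2}\bigl[(1+\theta) w_1 + (1-\theta) w_2\bigr].
\]
At an arbitrary point $(t,x) \in [0,T_0) \times \mathbb{R}$ the pair $(w_1(t,x),w_2(t,x))$ lies in the rectangle $[\inf w_1(0,\cdot),\sup w_1(0,\cdot)] \times [\inf w_2(0,\cdot),\sup w_2(0,\cdot)]$, but the two components originate from different base points of the two families of characteristics, so all combinations in this rectangle must be accommodated. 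Since $1-\theta \geq 0$ and $1+\theta > 0$, maximizing $(1-\theta) w_1 + (1+\theta) w_2$ over the rectangle and minimizing $(1+\theta) w_1 + (1-\theta) w_2$ over the rectangle reduce the strict positivity of $\sigma^\theta \mp \beta$ to an inequality on the initial extrema of $w_1, w_2$. The hypotheses \eqref{eq:initial data statement gamma general blow up C1 second} are precisely designed so that these extremal inequalities hold with a positive margin $\delta > 0$ independent of $(t,x) \in [0,T_0) \times \mathbb{R}$; this gives $\sigma^\theta - |\beta| \geq \delta$ uniformly on $[0,T_0) \times \mathbb{R}$.

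Finally, were $T_0 < T$, then continuity of $(\sigma,\beta)$ on $[0,T) \times \mathbb{R}$ extends the bound $\sigma^\theta - |\beta| \geq \delta$ to $t = T_0$ and hence, by a further continuity argument, to a right-neighbourhood $[0, T_0 + \varepsilon)$ with $\varepsilon > 0$, contradicting the maximality of $T_0$. Hence $T_0 = T$ and both \eqref{eq:riemann invariants max min pple} and \eqref{eq:sigma cone} hold on the full interval $[0,T) \times \mathbb{R}$. In particular $w_1 - w_2 > 0$ throughout, so $\sigma > 0$, ruling out Carrollian liquescence. The main subtlety is the third step: one must verify that the conditions \eqref{eq:initial data statement gamma general blow up C1 second} indeed yield the sharp extremal inequalities on the rectangle and not just pointwise at the initial surface, since the coupling between the two characteristic families makes the worst case involve the \emph{pair} of extrema rather than initial values coming from a common base point.
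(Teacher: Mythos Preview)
Your proposal is correct and takes essentially the same approach as the paper: a bootstrap on the maximal interval of strict hyperbolicity, the method of characteristics to propagate the max/min principle for $w_1,w_2$, extremal bounds over the rectangle $[m_1,M_1]\times[m_2,M_2]$ to obtain a uniform lower bound $\sigma^\theta-|\beta|\geq\delta$, and a continuity argument to close the bootstrap. The only cosmetic difference is that the paper organizes the contradiction around a \emph{first failure time} $t_*$ at which one of four sign conditions ($w_1>0$, $w_2<0$, $\beta\pm\sigma^\theta\neq0$) is violated at some point $x_*$, whereas you track only the hyperbolicity condition via the supremum $T_0$; your observation that the worst case over the rectangle involves the pair of extrema (because the two Riemann invariants are transported along different characteristic families) is exactly the mechanism the paper exploits in its equations \eqref{eq:first eigenvalue bounds}--\eqref{eq:second eigenvalue bounds}.
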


For clarity of exposition, we briefly summarise the underlying idea behind the proof of Proposition \ref{prop:invariant regions} with the following formal argument. By interpreting the diagonalised system \eqref{eq:riemann invariants eqns after hyp} as two separate scalar conservation laws, we expect the Maximum Principle to imply ($j=1,2$) 
\begin{equation}\label{eq:max pple formal}
    \underbrace{\inf_\mathbb{R} w_j(0,\cdot)}_{\eqdef m_j} \leq w_j(t,x) \leq \underbrace{\sup_\mathbb{R} w_j(0,\cdot)}_{\eqdef M_j} \quad \text{for all } (t,x) \in [0,T)\times\mathbb{R}, 
\end{equation}
provided the eigenvalues $\lambda_j$ ($j=1,2$) are well-defined along solution trajectories in the phase space; \textit{i.e.}~$\beta \pm \sigma^\theta \neq 0$. In terms of the Riemann invariants, 
\begin{equation*}
    \beta = \frac{1}{2}(w_1+w_2), \quad \sigma^\theta = \frac{\theta}{2}(w_1-w_2), 
\end{equation*}
whence, in view of \eqref{eq:max pple formal}, there holds 
\begin{equation*}
    \frac{1}{2}(m_1+m_2) \leq \beta \leq \frac{1}{2}(M_1+M_2), \quad \frac{\theta}{2}(m_1-M_2) \leq \sigma^\theta \leq \frac{\theta}{2}(M_1-m_2), 
\end{equation*}
and thus 
\begin{equation}\label{eq:first eigenvalue bounds}
  \frac{1}{2}\Big[ (m_1 - \theta M_1) + (1+\theta)m_2  \Big]   \leq \beta-\sigma^\theta \leq \frac{1}{2}\Big[  (M_1-\theta m_1) + (1+\theta)M_2   \Big], 
\end{equation}
while 
\begin{equation}\label{eq:second eigenvalue bounds}
  \frac{1}{2}\Big[ (1+\theta)m_1 - (\theta M_2-m_2)  \Big]  \leq \beta + \sigma^\theta \leq \frac{1}{2}\Big[ (1+\theta)M_1 + (M_2 - \theta m_2) \Big]. 
\end{equation}
In turn, we simultaneously impose 
\begin{equation}\label{eq:eigen impose}
    (M_1-\theta m_1) + (1+\theta)M_2 < 0, \quad (1+\theta)m_1 - (\theta M_2-m_2) > 0, 
\end{equation}
which is precisely \eqref{eq:initial data statement gamma general blow up C1 second}, such that there holds $\lambda_2^{-1} = \beta+\sigma^\theta > 0$ and $\lambda_1^{-1} = \beta-\sigma^\theta < 0$.

\begin{remark}[Compatibility of initial data requirements] 
Note that the conditions \eqref{eq:initial data statement gamma general blow up C1} and \eqref{eq:initial data statement gamma general blow up C1 second} can always be simultaneously imposed for any value of $\theta > 0$, \textit{e.g.} choose $m_1 = m > 0$, $M_1 = (1+\theta)m$, $m_2 = (1+\theta)(-m)$, and $M_2 = -m$, and observe that $m_2 < M_2 < 0 < m_1 < M_1$ as well as 
\begin{equation*}
    (M_1-\theta m_1) + (1+\theta)M_2 = -\theta m < 0, \quad  (1+\theta)m_1 - (\theta M_2-m_2) = \theta m > 0. 
\end{equation*}
In fact, one can realise the required inequalities \eqref{eq:eigen impose} with the softer conditions $m_1 = m > 0$, $M_1 = (1+\theta)m$, and 
\begin{equation}\label{eq:soften mj constraint}
-\Big(2+\theta - \frac{1}{1+\theta}\Big)m < m_2 < M_2 < -\frac{m}{1+\theta}; 
\end{equation}
where we observe that 
\begin{equation*}
    -\Big(2+\theta - \frac{1}{1+\theta}\Big) < -\frac{1}{1+\theta} \quad \text{for all } \theta > 0 \text{ i.e.~}\gamma > 1, 
\end{equation*}
so that the condition \eqref{eq:soften mj constraint} is reasonable. This in particular gives an open set of initial data.
\end{remark}

\begin{proof}[Proof of Proposition \ref{prop:invariant regions}]

\noindent 1. \textit{Initialisation}.     Given the choice of initial condition, referring also to \eqref{eq:first eigenvalue bounds}--\eqref{eq:second eigenvalue bounds}, since the solution is $C^1$ and $w_1,w_2$ are continuous functions of $(\sigma,\beta)$, there exists a time interval $[0,t_*)$ over which there holds 
    \begin{equation}\label{eq:short time interval}
      w_1(t,x) > 0 > w_2(t,x), \quad \underbrace{\big(\beta - \sigma^\theta\big)(t,x)}_{=\lambda_1^{-1}} < 0 < \underbrace{\big(\beta + \sigma^\theta\big)(t,x)}_{=\lambda_2^{-1}}  \quad \text{for all } (t,x) \in [0,t_*)\times\mathbb{R}. 
    \end{equation}
  Suppose for contradiction that $t_{*}$ is the smallest time for which there exists $x_* \in \mathbb{R}$ such that any of the following happen:
  \begin{itemize}
      \item[(i)] $w_1(t_{*},x_*) = 0$; 
      \item[(ii)] $w_2(t_{*},x_*) = 0$; 
      \item[(iii)] $\beta(t_*,x_*) - \sigma^\theta(t_*,x_*) = 0$; 
      \item[(iv)] $\beta(t_*,x_*) + \sigma^\theta(t_*,x_*) = 0$. 
  \end{itemize}
We will show in Steps 2 and 3 of the proof that such $t_*$ cannot exist by a classical ODE argument. 

    \smallskip 
    
  \noindent 2. \textit{Preservation of positivity of Riemann invariants}.  Observe that, on $[0,t_{*})\times\mathbb{R}$, the eigenvalues $\lambda_j$ ($j=1,2$) are well-defined and continuous functions of $(\sigma,\beta)$ as a consequence of \eqref{eq:short time interval}. In turn, the characteristic curves defined by the initial-value problems ($j=1,2$), 
    \begin{equation}\label{eq:char curves invariant regions}
        \left\lbrace \begin{aligned}
            &\frac{\der x^j(t;x_0)}{\der t} = \lambda_j(t;x_0), \\ 
            &x^j(0;x_0) = x_0, 
        \end{aligned} \right. 
    \end{equation}
   where we use the slight abuse of notation $\lambda_j(t;x_0) = \lambda_j(\sigma(t,x^j(t;x_0)),\beta(t,x^j(t;x_0)))$, are well-defined and continuously differentiable for all $t \in [0,t_{*})$ and all choices of $x_0 \in \mathbb{R}$. Furthermore, as a consequence of the Inverse Function Theorem, the sign conditions on $\lambda_j$ imply that the curves $x_0 \mapsto x^j(t;x_0)$ are $C^1$-diffeomorphisms of $\mathbb{R}$ for $t \in [0,t_{*})$. It follows from \eqref{eq:riemann invariants eqns after hyp}, which is satisfied as a pointwise equality for $C^1$ solutions, that for all $t \in [0,t_*)$ 
    \begin{equation}\label{eq:solve char invariant}
      \begin{aligned} 
            & \inf_{\mathbb{R}} w_1(0,\cdot) \leq \underbrace{w_1(t,x^2(t;x_0))}_{= w_1(0,x_0)} \leq \sup_{\mathbb{R}} w_1(0,\cdot), \\ 
            & \inf_{\mathbb{R}} w_2(0,\cdot) \leq \underbrace{w_2(t,x^1(t;x_0))}_{= w_2(0,x_0)} \leq \sup_{\mathbb{R}} w_2(0,\cdot). 
      \end{aligned}
    \end{equation}
As a consequence of $x_0 \mapsto x^j(t;x_0)$ being $C^1$-diffeomorphisms of $\mathbb{R}$ for $t \in [0,t_*)$, it follows that, for all $(t,x) \in [0,t_*)\times\mathbb{R}$, 
    \begin{equation*}
       \inf_{\mathbb{R}} w_1(0,\cdot) \leq w_1(t,x) \leq \sup_{\mathbb{R}} w_1(0,\cdot) , \quad \inf_{\mathbb{R}} w_2(0,\cdot) \leq w_2(t,x) \leq \sup_{\mathbb{R}} w_2(0,\cdot). 
    \end{equation*}
    Using the continuity of $w_j$ and the previous line, we find 
    \begin{equation*}
        w_1(t_*,x_*) = \lim_{t \uparrow t_*} w_1(t,x_*) \geq \inf_{\mathbb{R}} w_1(0,\cdot) > 0, \quad w_2(t_*,x_*) = \lim_{t \uparrow t_*} w_2(t,x_*) \leq \sup_{\mathbb{R}} w_2(0,\cdot) < 0, 
    \end{equation*}
    which contradicts possibilities (i) and (ii). 

    \smallskip 

  \noindent 3. \textit{Preservation of positivity of eigenvalues}. For possibilities (iii) and (iv), the computations \eqref{eq:first eigenvalue bounds}--\eqref{eq:second eigenvalue bounds} are justified on the interval $[0,t_*)$ by the previous argument. Thus, the continuity of $\sigma,\beta$ yields 
    \begin{equation*}
        \big(\beta+\sigma^\theta\big)(t_*,x_*) = \lim_{t\uparrow t_*} \big(\beta+\sigma^\theta\big)(t,x_*) \geq (1+\theta)\inf_\mathbb{R}w_1(0,\cdot) - \big( \theta \sup_\mathbb{R} w_2(0,\cdot)-\inf_\mathbb{R}w_2(0,\cdot) \big) > 0, 
    \end{equation*}
    and similarly 
    \begin{equation*}
        \big(\beta-\sigma^\theta\big)(t_*,x_*) = \lim_{t\uparrow t_*} \big(\beta-\sigma^\theta\big)(t,x_*) \leq \big(\sup_\mathbb{R} w_1(0,\cdot)-\theta \inf_\mathbb{R} w_1(0,\cdot)\big) + (1+\theta)\sup_\mathbb{R} w_2(0,\cdot) < 0, 
    \end{equation*}
    which contradicts possibilities (iii) and (iv). It follows that \eqref{eq:short time interval} holds for all $(t,x) \in [0,T) \times \mathbb{R}$, and thus we have proved \eqref{eq:sigma cone}. 

    \smallskip 

    \noindent 4. \textit{Preservation of initial positivity}: It follows from Steps 2 and 3 that, since \eqref{eq:short time interval} holds on all of $[0,T)\times\mathbb{R}$, the characteristic curves \eqref{eq:char curves invariant regions} are well-defined and continuously differentiable for all times $t \in [0,T)$ and all choices of $x_0 \in \mathbb{R}$. In turn, \eqref{eq:solve char invariant} is valid for all $t \in [0,T)$, with characteristic curves $x_0 \mapsto x^j(t;x_0)$ being $C^1$-diffeomorphisms of $\mathbb{R}$ for all $t \in [0,T)$, and we deduce 
    \begin{equation*}
       \inf_{\mathbb{R}} w_j(0,\cdot) \leq w_j(t,x) \leq \sup_{\mathbb{R}} w_j(0,\cdot)\quad \text{for all } (t,x) \in \mathbb{R}, 
    \end{equation*}
 ($j=1,2$) and therefore \eqref{eq:riemann invariants max min pple} is proved. 
\end{proof}

In the specific case $\gamma=3$, the proof and required conditions simplify considerably. Indeed, in this case $\lambda_1^{-1} = w_2$ and $\lambda_2^{-1} = w_1$. It therefore follows from the same argument as in the Proof of Proposition \ref{prop:invariant regions} that the initial condition \eqref{eq:initial data statement gamma general blow up C1}, \textit{i.e.} 
\begin{equation*}
    \inf_\mathbb{R} w_1(0,\cdot) > 0 > \sup_\mathbb{R} w_2(0,\cdot) 
\end{equation*}
is sufficient to ensure that $\sigma - |\beta| > 0$ on $[0,T)\times\mathbb{R}$. We therefore have the following corollary. 

\begin{corollary}[Invariant Regions for $\gamma=3$]\label{cor:invariant regions gamma is 3}
    Let $\gamma =3$ and $(\sigma,\beta)$ be a $C^1$ solution on $[0,T)\times\mathbb{R}$ with initial data $(\sigma_0,\beta_0)$ satisfying 
\begin{equation*}
    \inf_\mathbb{R} w_1(0,\cdot) > 0 > \sup_\mathbb{R} w_2(0,\cdot), 
\end{equation*}
which imposes $\inf_\mathbb{R} \sigma_0 > 0$. Then, there holds $(j=1,2)$ 
\begin{equation*}
    \inf_{\mathbb{R}} w_j(0,\cdot) \leq w_j(t,x) \leq \sup_\mathbb{R} w_j(0,\cdot) \quad \text{for all } (t,x) \in [0,T)\times\mathbb{R}, 
\end{equation*}
as well as 
\begin{equation*}
  \sigma(t,x) - |\beta(t,x)| > 0 \quad \text{for all } (t,x) \in [0,T)\times\mathbb{R}. 
\end{equation*}
\end{corollary}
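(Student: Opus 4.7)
The plan is to mirror the proof of Proposition \ref{prop:invariant regions}, exploiting the dramatic simplification that occurs when $\gamma=3$ (so $\theta=1$): the Riemann invariants read $w_1 = \beta + \sigma$ and $w_2 = \beta - \sigma$, and from \eqref{eq:eigenvalues general case} we have
\begin{equation*}
\lambda_2^{-1} = \beta + \sigma^\theta = w_1, \qquad \lambda_1^{-1} = \beta - \sigma^\theta = w_2.
\end{equation*}
Consequently, the sign conditions $w_1 > 0 > w_2$ (coming directly from the hypothesis on the initial data) are themselves the sign conditions on $\lambda_2^{-1}$ and $\lambda_1^{-1}$ needed to run the characteristic argument. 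No auxiliary constraint analogous to \eqref{eq:initial data statement gamma general blow up C1 second} is needed.

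The first step is to set $t_*$ to be the smallest time at which either $w_1$ vanishes or $w_2$ vanishes at some point $x_* \in \mathbb{R}$; by continuity of $(\sigma,\beta)$ and the strict inequalities in the hypothesis, $t_* > 0$ and on $[0,t_*) \times \mathbb{R}$ one has $w_1 > 0 > w_2$. On this slab the eigenvalues $\lambda_1 = 1/w_2$ and $\lambda_2 = 1/w_1$ are therefore well-defined and $C^1$ functions of $(\sigma,\beta)$, so that the characteristic ODEs
\begin{equation*}
\frac{\der x^j(t;x_0)}{\der t} = \lambda_j\bigl(\sigma(t,x^j(t;x_0)),\beta(t,x^j(t;x_0))\bigr), \qquad x^j(0;x_0)=x_0 \quad (j=1,2),
\end{equation*}
admit unique $C^1$ solutions for every $x_0 \in \mathbb{R}$, and $x_0 \mapsto x^j(t;x_0)$ is a $C^1$-diffeomorphism of $\mathbb{R}$ for each $t \in [0,t_*)$.

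Next, applying \eqref{eq:riemann invariants eqns after hyp} along these characteristics gives $w_1(t,x^2(t;x_0)) = w_1(0,x_0)$ and $w_2(t,x^1(t;x_0)) = w_2(0,x_0)$. Using the diffeomorphism property, this yields $\inf_\mathbb{R} w_j(0,\cdot) \leq w_j(t,x) \leq \sup_\mathbb{R} w_j(0,\cdot)$ for all $(t,x) \in [0,t_*) \times \mathbb{R}$ and $j=1,2$. Passing to the limit $t \uparrow t_*$ at $x = x_*$ by continuity gives
\begin{equation*}
w_1(t_*,x_*) \geq \inf_\mathbb{R} w_1(0,\cdot) > 0, \qquad w_2(t_*,x_*) \leq \sup_\mathbb{R} w_2(0,\cdot) < 0,
\end{equation*}
contradicting the definition of $t_*$. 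Hence $t_* \geq T$, and the bounds on $w_1,w_2$ extend to all of $[0,T) \times \mathbb{R}$; the estimate $\sigma - |\beta| > 0$ then follows immediately from $w_1 > 0 > w_2$.

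The argument is essentially a streamlined version of the proof of Proposition \ref{prop:invariant regions}, so I expect no real obstacle: the potential difficulty in the general-$\gamma$ case — decoupling the preservation of positivity of the Riemann invariants from the preservation of positivity of $\beta \pm \sigma^\theta$ — is absent here, since the two coincide when $\theta=1$. The only care needed is in verifying that the characteristic maps are diffeomorphisms on $[0,t_*)$, which follows from the continuity and non-vanishing of $\lambda_1,\lambda_2$ on this set together with standard ODE theory.
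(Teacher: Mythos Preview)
Your proposal is correct and is essentially identical to the paper's approach: the paper does not give a separate proof of the corollary but simply observes, immediately before stating it, that when $\gamma=3$ one has $\lambda_1^{-1}=w_2$ and $\lambda_2^{-1}=w_1$, so the argument of Proposition~\ref{prop:invariant regions} goes through verbatim with only the hypothesis \eqref{eq:initial data statement gamma general blow up C1}. Your write-up spells this out explicitly, which is fine.
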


\subsection{Local-in-time well-posedness}\label{sec:local}

In what follows, we establish the existence and uniqueness of a local-in-time solution of \eqref{eq:carroll eqns i}; we do so by showing local existence to \eqref{eq:riemann invariants eqns after hyp} and then mapping back to the original variables $(\sigma,\beta)$ by the chain rule. To begin, we recall the following standard result, which may be found, for example, in \cite{LeeYu64}.

\begin{lemma}\label{localexistence}Consider the Cauchy problem for the system
 \begin{equation*}
\begin{cases}
s_t+a_2(r,s) \hspace{.5mm}s_x=0, \\ r_t+a_1(r,s)\hspace{.5mm}r_x=0, \end{cases}\end{equation*}
where the initial data $(r_0,s_0) \in C^1(\mathbb{R})$ and the functions $a_i \in C^{1}(\mathbb{R}^2)$ are such that either of the following conditions are satisfied: 
\begin{itemize}
    \item[(i)] $\partial_r a_i$ and $\partial_s a_i$ are both non-negative everywhere; 
    \item[(ii)] $\partial_r a_i$ and $\partial_s a_i$ are both non-positive everywhere. 
\end{itemize}
Then there exists $T>0$ such that, on the domain $[0,T] \times \mathbb{R}$, there exists a unique $C^1$ solution. 
\end{lemma}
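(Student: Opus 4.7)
The plan is to solve the Cauchy problem by the classical method of characteristics, recast as a fixed-point equation on a short time strip. The diagonal structure of the system is the key ingredient: along the family of characteristics with speed $a_1(r,s)$, the invariant $r$ is constant, and along the family with speed $a_2(r,s)$, the invariant $s$ is constant. Hence a solution, if it exists, can be represented pointwise in terms of the initial data composed with the inverses of the two characteristic maps.

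First, I would introduce, for each candidate pair $(r,s)$ lying in a suitable closed ball $\mathcal{X}_T$ of $C^1([0,T]\times\mathbb{R})^2$ around the initial data (measured in a weighted $C^1$ norm compatible with the $C^1$ bounds of $r_0, s_0$), the two characteristic ODEs
\begin{equation*}
\tfrac{\mathrm{d}}{\mathrm{d}t} x^1(t;\alpha) = a_1(r,s)(t, x^1(t;\alpha)), \qquad \tfrac{\mathrm{d}}{\mathrm{d}t} x^2(t;\beta) = a_2(r,s)(t, x^2(t;\beta)),
\end{equation*}
with initial positions $\alpha, \beta$ at $t=0$. Because $a_i \in C^1(\mathbb{R}^2)$ and the candidate $(r,s)$ is $C^1$, Picard--Lindelöf gives unique $C^1$ solutions; moreover, the Jacobians $\partial_\alpha x^1$ and $\partial_\beta x^2$ solve linear variational ODEs with coefficients involving $\partial_r a_i, \partial_s a_i, r_x, s_x$, and on a sufficiently short time interval they remain uniformly close to $1$. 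Consequently $\alpha \mapsto x^1(t;\alpha)$ and $\beta \mapsto x^2(t;\beta)$ are $C^1$-diffeomorphisms of $\mathbb{R}$ for $t \in [0,T]$.

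Next, I would define the iteration map $\Phi: \mathcal{X}_T \to \mathcal{X}_T$ by
\begin{equation*}
\Phi(r,s)(t,x) \defeq \bigl( r_0(\alpha(t,x)),\; s_0(\beta(t,x)) \bigr),
\end{equation*}
where $\alpha(t,\cdot), \beta(t,\cdot)$ are the inverses of the above characteristic maps. This encodes the constancy of $r$ along $x^1$ and of $s$ along $x^2$, since formally $\tfrac{\mathrm d}{\mathrm d t}\bigl[r(t,x^1(t;\alpha))\bigr] = r_t + a_1 r_x = 0$. The core task is to show that $\Phi$ is a contraction on $\mathcal{X}_T$ in a convenient norm (for instance, $C^0$ augmented with uniform $C^1$ bounds) for $T$ small enough: if $(r,s)$ and $(\tilde r, \tilde s)$ are two candidates, then the difference of their characteristic speeds is controlled by $\|(r,s) - (\tilde r, \tilde s)\|$, which, via Grönwall's inequality applied to the characteristic ODEs, gives Lipschitz control of the characteristic maps, their inverses, and hence of $\Phi(r,s) - \Phi(\tilde r, \tilde s)$. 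Banach's fixed-point theorem then delivers a unique $(r,s) \in \mathcal{X}_T$ with $\Phi(r,s) = (r,s)$, which by construction is the sought $C^1$ solution, and uniqueness globally among $C^1$ solutions follows from the same contraction estimate.

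The main obstacle I anticipate is propagating the full $C^1$ regularity — and not merely continuity — through the iteration. This demands uniform control on $\partial_x r, \partial_x s$ for all candidates in $\mathcal{X}_T$, which in turn requires uniform two-sided bounds on the Jacobians $\partial_\alpha x^1, \partial_\beta x^2$ on $[0,T]$. Here the hypothesis that $\partial_r a_i, \partial_s a_i$ have a common sign is exploited: the linear variational equations governing these Jacobians have coefficients of a controllable sign along characteristics, so that no instantaneous sign changes or blow-ups occur, and one may close the $C^1$ estimate by choosing $T$ small depending only on the initial $C^1$ norms of $(r_0, s_0)$ and the $C^1$ norms of $a_1, a_2$ on the relevant compact set. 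With this uniform control in hand, the fixed point lies in $C^1$ and satisfies the system pointwise, completing the proof.
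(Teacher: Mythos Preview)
The paper does not prove this lemma: it is introduced as a ``standard result, which may be found, for example, in \cite{LeeYu64}'' and is then used as a black box in Proposition~\ref{prop:local well posed}. There is therefore no proof in the paper against which to compare your proposal.

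Your characteristic/fixed-point scheme is a standard and broadly correct route to local $C^1$ existence for diagonal quasilinear systems. One point deserves sharpening, however: your account of where hypothesis (i) or (ii) enters is not convincing. In a short-time contraction argument with $a_i \in C^1$ and data in a bounded $C^1$ ball, the Jacobians $\partial_\alpha x^1$, $\partial_\beta x^2$ are controlled simply because $T$ is small and the coefficients of the variational equations are bounded --- no sign condition on $\partial_r a_i$, $\partial_s a_i$ is needed to preclude ``instantaneous sign changes or blow-ups''. The monotonicity hypothesis is rather the ingredient that allows one to run a \emph{monotone} successive-approximation scheme (as in the cited reference) and thereby obtain a uniform existence time $T$ across all of $\mathbb{R}$ for data that are merely $C^1$ with no a priori $L^\infty$ bound. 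Since you implicitly work on a bounded ball (``the relevant compact set''), your sketch establishes a slightly weaker statement than the lemma as written --- which is, in fact, all the paper's application requires, the data there lying in a bounded invariant region --- but it neither genuinely uses nor correctly identifies the role of the sign hypothesis.
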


Using \Cref{localexistence}, we obtain the following result concerning local-in-time well-posedness in $C^1$ for the system \eqref{eq:carroll eqns i}. 

\begin{prop}[Local-in-time well-posedness]\label{prop:local well posed}
    Let $(\sigma_0,\beta_0)$ be $C^1$ initial data satisfying the condition \eqref{eq:initial data statement gamma general blow up C1}, \textit{i.e.}
        \begin{equation*}
            \inf_\mathbb{R} w_1(0,\cdot) > 0 > \sup_\mathbb{R} w_2(0,\cdot). 
        \end{equation*}
        Furthermore, if $\gamma \in (1,3)$, assume additionally that \eqref{eq:initial data statement gamma general blow up C1 second} holds, \textit{i.e.} 
        \begin{equation*}
       \begin{aligned} &\big(\sup_\mathbb{R} w_1(0,\cdot)-\theta \inf_\mathbb{R} w_1(0,\cdot)\big) + (1+\theta)\sup_\mathbb{R} w_2(0,\cdot) < 0, \\ 
       &(1+\theta)\inf_\mathbb{R}w_1(0,\cdot) - \big( \theta \sup_\mathbb{R} w_2(0,\cdot)-\inf_\mathbb{R}w_2(0,\cdot) \big) > 0. 
       \end{aligned}
\end{equation*}
Then, there exists $T>0$ such that, on the domain $[0,T]\times\mathbb{R}$, there exists a unique $C^1$ solution $(\sigma,\beta)$ of \eqref{eq:carroll eqns i}. Furthermore, for all $t \in [0,T]$, there holds 
\begin{equation}\label{eq:local exist preserve 1}
    \inf_\mathbb{R} w_1(t,\cdot) > 0 > \sup_\mathbb{R} w_2(t,\cdot), 
\end{equation}
and, if $\gamma \in (1,3)$, there also holds 
 \begin{equation}\label{eq:local exist preserve 2}
       \begin{aligned} &\big(\sup_\mathbb{R} w_1(t,\cdot)-\theta \inf_\mathbb{R} w_1(t,\cdot)\big) + (1+\theta)\sup_\mathbb{R} w_2(t,\cdot) < 0, \\ 
       &(1+\theta)\inf_\mathbb{R}w_1(t,\cdot) - \big( \theta \sup_\mathbb{R} w_2(t,\cdot)-\inf_\mathbb{R}w_2(t,\cdot) \big) > 0. 
       \end{aligned}
\end{equation}
\end{prop}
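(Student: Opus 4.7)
The plan is to apply \Cref{localexistence} to the diagonalised system \eqref{eq:riemann invariants eqns after hyp} and then recover $(\sigma,\beta)$ via the inverse of the change of variables $(\sigma,\beta)\mapsto(w_1,w_2)$. Inverting \eqref{eq:riemann invariant functions intro} gives $\beta=(w_1+w_2)/2$ and $\sigma^\theta=\theta(w_1-w_2)/2$, so substituting into \eqref{eq:eigenvalues general case} yields
\begin{equation*}
\lambda_1(w_1,w_2)=\frac{2}{(1-\theta)w_1+(1+\theta)w_2},\qquad \lambda_2(w_1,w_2)=\frac{2}{(1+\theta)w_1+(1-\theta)w_2}.
\end{equation*}
A direct computation gives
\begin{equation*}
\partial_{w_1}\lambda_2=-\frac{2(1+\theta)}{[(1+\theta)w_1+(1-\theta)w_2]^2},\qquad \partial_{w_2}\lambda_2=-\frac{2(1-\theta)}{[(1+\theta)w_1+(1-\theta)w_2]^2},
\end{equation*}
and analogously for $\lambda_1$; since $\theta\in(0,1]$, both partial derivatives of each $\lambda_j$ are non-positive wherever the formulas are defined, which matches condition (ii) of \Cref{localexistence}.

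The assumptions \eqref{eq:initial data statement gamma general blow up C1}--\eqref{eq:initial data statement gamma general blow up C1 second} imply that the image of $x\mapsto(w_1(0,x),w_2(0,x))$ is contained in the bounded rectangle
\begin{equation*}
\mathcal{R}=[\inf_\mathbb{R} w_1(0,\cdot),\sup_\mathbb{R} w_1(0,\cdot)]\times[\inf_\mathbb{R} w_2(0,\cdot),\sup_\mathbb{R} w_2(0,\cdot)],
\end{equation*}
and, using the identities $\beta\pm\sigma^\theta=[(1\pm\theta)w_1+(1\mp\theta)w_2]/2$, that $\mathcal{R}$ is strictly separated from the two singular lines $(1\pm\theta)w_1+(1\mp\theta)w_2=0$. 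In particular $\lambda_1,\lambda_2$ are $C^1$ on an open neighbourhood $\mathcal{U}\supset\mathcal{R}$. Since \Cref{localexistence} requires globally $C^1$ coefficients with the sign property holding everywhere, I would extend $\lambda_1|_\mathcal{U},\lambda_2|_\mathcal{U}$ to $\tilde\lambda_1,\tilde\lambda_2\in C^1(\mathbb{R}^2)$ by a smooth cut-off and interpolation, arranged so that both partial derivatives remain non-positive on all of $\mathbb{R}^2$. Applying \Cref{localexistence} to the modified system produces $T_0>0$ and a unique $C^1$ pair $(w_1,w_2)$ on $[0,T_0]\times\mathbb{R}$ with the prescribed initial data; since $(w_1(0,\cdot),w_2(0,\cdot))$ lies in the interior of $\mathcal{R}$ and $(w_1,w_2)$ is continuous, a standard bootstrap yields $T\in(0,T_0]$ on which the solution takes values in $\mathcal{U}$ and hence solves the original Riemann invariant system \eqref{eq:riemann invariants eqns after hyp}.

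To obtain preservation of the initial conditions, observe that the transport structure of \eqref{eq:riemann invariants eqns after hyp}, combined with the strict signs of $\lambda_1,\lambda_2$ on $\mathcal{U}$, implies that the two families of characteristic curves are $C^1$-diffeomorphisms of $\mathbb{R}$ for $t\in[0,T]$ (after shrinking $T$ if necessary). Consequently the range of $w_j(t,\cdot)$ coincides with the range of $w_j(0,\cdot)$ for $j=1,2$, so $\inf_\mathbb{R} w_j(t,\cdot)$ and $\sup_\mathbb{R} w_j(t,\cdot)$ are constant in $t$ on $[0,T]$, and \eqref{eq:local exist preserve 1}--\eqref{eq:local exist preserve 2} follow from the corresponding conditions at $t=0$. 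Since $\sigma^\theta=\theta(w_1-w_2)/2>0$ on $\mathcal{R}$ (as $w_1>0>w_2$ there), the map $(w_1,w_2)\mapsto(\sigma,\beta)=([\theta(w_1-w_2)/2]^{1/\theta},(w_1+w_2)/2)$ is a $C^1$-diffeomorphism of $\mathcal{R}$ onto its image, and a chain-rule computation shows that the resulting $(\sigma,\beta)$ satisfies \eqref{eq:carroll eqns i} pointwise; uniqueness descends from the uniqueness in \Cref{localexistence}. The main delicate point is the cut-off step, namely extending $\lambda_1,\lambda_2$ to $C^1$ functions on $\mathbb{R}^2$ while preserving the non-positivity of both partial derivatives everywhere; the remainder of the argument is essentially routine.
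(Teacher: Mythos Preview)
Your approach is essentially the same as the paper's: rewrite the eigenvalues as functions of $(w_1,w_2)$, verify that both partial derivatives of each $\lambda_j$ are non-positive for $\theta\in(0,1]$, apply \Cref{localexistence} to the diagonalised system \eqref{eq:riemann invariants eqns after hyp}, and then invert the map $(\sigma,\beta)\mapsto(w_1,w_2)$. Two minor differences are worth noting. First, you are more careful than the paper about the hypothesis $a_i\in C^1(\mathbb{R}^2)$ in \Cref{localexistence}: the eigenvalues are singular along two lines in the $(w_1,w_2)$-plane, and the paper simply applies the lemma without comment, whereas you introduce a cut-off. Your proposed construction can indeed be carried out by composing $\lambda_j$ with a product map $(\phi_1(w_1),\phi_2(w_2))$ where each $\phi_i$ is $C^1$, non-decreasing, equals the identity on the relevant interval, and has bounded range contained in the $w_i$-projection of $\mathcal{U}$; the chain rule then preserves the non-positivity of both partials. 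Second, for the preservation statements \eqref{eq:local exist preserve 1}--\eqref{eq:local exist preserve 2} the paper simply cites \Cref{prop:invariant regions}, while you argue directly that the range of $w_j(t,\cdot)$ coincides with that of $w_j(0,\cdot)$ via constancy along characteristics; these are the same argument in substance.
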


\begin{proof}
From Lemma \ref{lem:hyp}, in terms of the Riemann invariants, we have 
\begin{equation}\label{eq:eig in terms of ri}
    \lambda_1 = \frac{2}{w_1(1-\theta)+w_2(1+\theta)}, \quad \lambda_2 =\frac{2}{w_1(1+\theta)+w_2(1-\theta)}, 
\end{equation}
and thus 
\begin{equation}\label{eq:computations derivs eigenvalues wrt riemann invariants}
   \begin{aligned} &\lambda_{1 w_1} = \frac{-2(1-\theta)}{[w_1(1-\theta)+w_2(1+\theta) ]^2}  , \quad \lambda_{1 w_2} = \frac{-2(1+\theta)}{[ w_1(1-\theta)+w_2(1+\theta) ]^2} , \\ 
& \lambda_{2 w_1} = \frac{-2(1+\theta)}{[ w_1(1+\theta)+w_2(1-\theta) ]^2}, \quad \lambda_{2w_2} = \frac{-2(1-\theta)}{[ w_1(1+\theta)+w_2(1-\theta) ]^2}. 
   \end{aligned}
\end{equation}
It therefore follows that the conditions of Lemma \ref{localexistence} are satisfied, whence there exists $T>0$ for which there exists a unique $C^1$ solution $(w_1,w_2)$ on the domain $[0,T]\times\mathbb{R}$. Furthermore, due to Proposition \ref{prop:invariant regions}, the conditions \eqref{eq:local exist preserve 1} and \eqref{eq:local exist preserve 2} are satisfied on the entire interval $[0,T]$. As such, by defining 
\begin{equation*}
    \sigma \defeq  \Big( \frac{\theta}{2}(w_1 - w_2) \Big)^{\frac{1}{\theta}}, \quad \beta \defeq  \frac{1}{2}(w_1+w_2), 
\end{equation*}
we have that $(\sigma,\beta) \in C^1(\mathbb{R})$ is a local-in-time solution of \eqref{eq:carroll eqns i}; note that the condition \eqref{eq:local exist preserve 1} implies that $\sigma(t,x)>0$ for all $(t,x) \in [0,T]\times\mathbb{R}$, whence $\sigma^{\theta-1}$ is well-defined. For $\gamma \in (1,3)$, the additional condition \eqref{eq:local exist preserve 2} implies that the eigenvalues $\lambda_1$, $\lambda_2$ are well-defined and bounded on the domain $[0,T]\times\mathbb{R}$; for $\gamma=3$, just the condition \eqref{eq:local exist preserve 1} is sufficient. 
\end{proof}

\section{$C^1$ Blow-up Criterion for $\gamma=3$}\label{sec:gamma 3}

As already mentioned, the setting for $\gamma=3$ is particularly simple due to the fact that one can write 
\begin{equation*}
    \lambda_1 =  \frac{1}{w_2}  , \quad \lambda_2 = \frac{1}{w_1}, 
\end{equation*}
and the equations for the Riemann invariants completely decouple and read
\begin{equation}\label{eq:riemann inv eqn gamma is 3}
    \begin{aligned}
        \partial_t w_j + \frac{1}{w_j}\partial_x w_j = 0 \qquad (j=1,2). 
    \end{aligned}
\end{equation}
Armed with Proposition \ref{prop:invariant regions}, we are ready to give the proof of Theorem \ref{thm:blow up criterion gamma is 3}.

\begin{proof}[Proof of \Cref{thm:blow up criterion gamma is 3}]

We emphasise that, with the initial conditions as prescribed in the statement of the theorem, the characteristic curves are well-defined up to the first blow-up time; this is because the eigenvalues $\lambda_j$ are well-defined up to this time by virtue of Corollary \ref{cor:invariant regions gamma is 3}. Furthermore, local-in-time existence and uniqueness is guaranteed by Proposition \ref{prop:local well posed}. In turn, the computations that follow (which are written in terms of characteristic curves) are justified. 

\smallskip 

\noindent 1. \textit{Blow-up of $w_{1x}$}. We differentiate the first equation in \eqref{eq:riemann inv eqn gamma is 3} for $w_1$ with respect to $x$ and set $\alpha_1 = w_{1x}$. Recalling the operators $\prime = \partial_t + \lambda_2 \partial_x$ and $\backprime = \partial_t + \lambda_1 \partial_x$, we obtain the Riccati-type equation 
\[  \alpha_1^{\prime} = \frac{1}{(w_1)^2}{(\alpha_1)^2}.      \]
Integrating along the characteristic $t \mapsto x(t)$ chosen such that
\begin{equation*}
    \left\lbrace\begin{aligned}
          & \frac{\der x}{\der t} = \lambda_2(t,x(t)), \\ 
          &x(0) = x_0, 
    \end{aligned}\right. 
\end{equation*}
where we use the slight abuse of notation $\lambda_2(t,x(t)) = \lambda_2(\sigma(t,x(t)),\beta(t,x(t)))$, we have 
\[ \frac{1}{\alpha_1(t, x(t))}= \frac{1}{\alpha_1(0, x_0)} - \int_0^t \frac{1}{w_1(s, x(s))^2} \d s.     \]
Recalling the equation for the Riemann invariants, we have that $w_1$ is constant along the aforementioned characteristic, whence 
\begin{equation}\label{eq:C1 blow up w1 gamma is 3}
    \frac{1}{\alpha_1(t, x(t))} = \frac{1}{\alpha_1(0, x_0)} - \frac{t}{w_1(0, x_0)^2}. 
    \end{equation}
In turn, if there exists $x_0 \in \mathbb{R}$ such that $\alpha(0,x_0) > 0$, then $w_{1x}$ becomes infinite at time $t_*$ given by 
\begin{equation}\label{eq:w1 blow up time gamma is 3}
    t_* = \frac{w_1(0,x_0)^2}{w_{1x}(0,x_0)}; 
\end{equation}
and moreover the equality \eqref{eq:C1 blow up w1 gamma is 3} implies $\frac{1}{\alpha_1(t, x(t))} > 0 $ for all $t \in [0,t_*)$. On the other hand, if $\alpha_1(0,x_0) \leq 0$ for all $x_0 \in \mathbb{R}$, then \eqref{eq:C1 blow up w1 gamma is 3} implies the one-sided bound 
\begin{equation}\label{eq:one sided lipschitz w1x}
    w_{1x}(t,x(t)) \geq - \frac{w_1(0,x_0)^2}{t} \geq - \frac{\big(\sup_{x_0 \in \mathbb{R}} w_1(0,x_0)\big)^2}{t}, 
\end{equation}
in view of the initial data assumption $w_1(0,x_0) > 0$ for all $x_0 \in \mathbb{R}$.

\smallskip 

\noindent 2. \textit{Blow-up of $w_{2x}$}. An identical strategy for $w_2$ yields, by setting $\alpha_2 = w_{2x}$, 
    \begin{equation*}
        \alpha_2^\backprime = \frac{1}{(w_2)^2}(\alpha_2)^2. 
    \end{equation*}
Integrating along the characteristic $t \mapsto y(t)$ chosen such that
\begin{equation*}
    \left\lbrace\begin{aligned}
          & \frac{\der y}{\der t} = \lambda_1(t,y(t)), \\ 
          &y(0) = y_0, 
    \end{aligned}\right. 
\end{equation*}
where we use the slight abuse of notation $\lambda_1(t,y(t)) = \lambda_1(\sigma(t,y(t)),\beta(t,y(t)))$, we have 
\[ \frac{1}{\alpha_2(t, y(t))}= \frac{1}{\alpha_2(0, y_0)} - \int_0^t \frac{1}{w_2(s, y(s))^2} \d s,     \]
and again using that $w_2$ is constant along the aforementioned characteristic, we obtain 
\begin{equation}\label{eq:C1 blow up w2 gamma is 3}
    \frac{1}{\alpha_2(t, y(t))}= \frac{1}{\alpha_2(0, y_0)} - \frac{t}{w_2(0, y_0)^2}. 
\end{equation}
As before, if there exists $y_0 \in \mathbb{R}$ such that $\alpha_2(0,y_0)>0$, then $w_{2x}$ becomes infinite at time $t_{**}$ given by 
\begin{equation}\label{eq:w2 blow up time gamma is 3}
    t_{**} = \frac{w_2(0,y_0)^2}{w_{2x}(0,y_0)}. 
\end{equation}
Similarly to \eqref{eq:one sided lipschitz w1x}, the equality \eqref{eq:C1 blow up w2 gamma is 3} implies $\frac{1}{\alpha_2(t, x(t))} > 0 $ for all $t \in [0,t_*)$. On the other hand, if $\alpha_2(0,x_0) \leq 0$ for all $x_0 \in \mathbb{R}$, then \eqref{eq:C1 blow up w2 gamma is 3} implies the one-sided bound 
\begin{equation}\label{eq:one sided lipschitz w2x}
    w_{2x}(t,x(t)) \geq - \frac{w_2(0,y_0)^2}{t} \geq - \frac{\big(\inf_{y_0 \in \mathbb{R}} w_2(0,y_0)\big)^2}{t}, 
\end{equation}
in view of the initial data assumption $w_2(0,y_0) < 0$ for all $y_0 \in \mathbb{R}$. 

\smallskip 

\noindent 3. \textit{Conclusion}. We see from equations \eqref{eq:C1 blow up w1 gamma is 3}--\eqref{eq:C1 blow up w2 gamma is 3} that, if $w_{1x}(0,x) \leq 0$ an $w_{2x}(0,x) \leq 0$ for all $x\in\mathbb{R}$, then $\alpha_1$ and $\alpha_2$ (\textit{i.e.}~$w_{1x}$ and $w_{2x}$, respectively) are well-defined for all times, and the solution is global since then $\sigma_x$ and $\beta_x$ are both finite for all times and the equation\footnote{More precisely, once the $x$-derivatives are known to be continuous, the system \eqref{eq:carroll eqns i} is a non-degenerate linear system of two equations for $\sigma_t$ and $\beta_t$ with $C^0$ coefficients, implying that $\sigma_t$ and $\beta_t$ are continuous.} implies $\sigma_t$ and $\beta_t$ are also finite. If either of those sign conditions on the derivatives of the initial data fail at a point $x_0 \in \mathbb{R}$, then we have finite-time blow-up. The expression for the blow-up time \eqref{eq:blowup time gamma is 3 thm statement} follows from \eqref{eq:w1 blow up time gamma is 3} and \eqref{eq:w2 blow up time gamma is 3}. The estimates \eqref{eq:one sided lipschitz w1x} and \eqref{eq:one sided lipschitz w2x} give the one-sided Lipschitz bounds 
\begin{equation*}
    \inf_{\mathbb{R}}(\sigma \pm \beta)_x(t,\cdot) \geq - \frac{C}{t} 
\end{equation*}
with 
\begin{equation*}
   C= \max\Big\{ \big( \sup_{\mathbb{R}}(\beta_0+\sigma_0) \big)^2 , ~ \big( \inf_{\mathbb{R}}(\beta_0-\sigma_0) \big)^2 \Big\}, 
\end{equation*}
which is bounded above by the constant given in the statement of Theorem \ref{thm:blow up criterion gamma is 3}. This completes the proof.
\end{proof}

\section{$C^1$ Blow-up Criterion for $\gamma \in (1,3)$}\label{sec:blow up}

\begin{proof}[Proof of Theorem \ref{thm:blow up criterion general gamma}]
As in the proof of Theorem \ref{thm:blow up criterion gamma is 3}, we emphasise again that, with the initial conditions as prescribed in the statement of the theorem, the characteristic curves are well-defined up to the first blow-up time, and local-in-time existence and uniqueness is guaranteed by Proposition \ref{prop:local well posed}.

\smallskip

We recall the expressions \eqref{eq:eig in terms of ri} and \eqref{eq:computations derivs eigenvalues wrt riemann invariants} in terms of the Riemann invariants, and, for the convenience of the reader, write the formula 
\begin{equation*}
    \lambda_2 - \lambda_1 = \frac{-4\theta(w_1-w_2)}{[w_1(1+\theta)+w_2(1-\theta)][w_1(1-\theta)+w_2(1+\theta)]}, 
\end{equation*}
as this quantity will arise several times in the argument below. 

\smallskip

\noindent 1. \textit{Blow-up of $w_{1x}$}. We apply the aforementioned strategy to study the evolution of the derivatives of the Riemann invariants along characteristic curves. Denoting $\alpha_1\defeq  w_{1x}$ and differentiating the first equation in \eqref{eq:riemann invariants eqns after hyp} with respect to $x$, we get 
\begin{equation}\label{eq:alpha 1 prime dynamic first} 
    \alpha_1^{\prime} + \lambda_{2w_1}(\alpha_1)^2 + \lambda_{2w_2}w_{2x} \alpha_1 =0.  
    \end{equation}
  In order to study the final term on the left-hand side, we use the expression $w_2^{\backprime}= w_2^{\prime}+(\lambda_1-\lambda_2)w_{2x}$, whence the second equation in \eqref{eq:riemann invariants eqns after hyp} (\textit{i.e.}~$w_2^\backprime = 0$) yields 
\begin{equation*} 
w_{2x} = \frac{w_2^{\prime}}{\lambda_2 -\lambda_1}. 
\end{equation*}
As a result, \eqref{eq:alpha 1 prime dynamic first}  becomes 
\begin{equation*} \alpha_1^{\prime} + \lambda_{2w_1} (\alpha_1)^2 + \lambda_{2w_2}\frac{w_2^{\prime}}{\lambda_2-\lambda_1} \alpha_1 =0. 
\end{equation*}

As per the strategy outlined in \S \ref{sec:lax background}, we seek to introduce a function $h_1$ such that 
\begin{equation}\label{eq:h1 w2 requirement}
    h_{1 w_2} = \frac{\lambda_{2w_2}}{\lambda_2-\lambda_1}, 
\end{equation}
when, using that $w_1' = 0$ from \eqref{eq:riemann invariants eqns after hyp}, the previous dynamical equation for $\alpha_1$ becomes 
\begin{equation}\label{eq:alpha 1 prime dynamic second} 
        \alpha_1^{\prime} + \lambda_{2w_1} (\alpha_1)^2 + h_1' \alpha_1 =0. 
\end{equation}
Direct computation shows that a suitable choice of function $h_1$ satisfying \eqref{eq:h1 w2 requirement} is 
\begin{equation*}
    h_1 = -\frac{(1-\theta)}{2\theta} \log(w_1 - w_2) - \log\big( (1+\theta) w_1 + (1-\theta)w_2  \big). 
\end{equation*}
Note that the logarithms are well-defined since their arguments are strictly positive by virtue of the invariant regions (\textit{cf.}~Proposition \ref{prop:invariant regions}); indeed, recalling the notation $m_j = \inf_\mathbb{R} w_j(0,\cdot)$, $M_j = \sup_\mathbb{R} w_j(0,\cdot)$, we have 
\begin{equation}\label{eq:important lower bound w1 computations}
    \begin{aligned} (1+\theta) w_1(t,x) + (1-\theta)w_2(t,x)  &\geq (1+\theta) \inf_\mathbb{R} w_1(t,\cdot) + \inf_\mathbb{R} w_2(t,\cdot) - \theta \sup_\mathbb{R} w_2(t,\cdot) \\ 
    &\geq (1+\theta) m_1 - (\theta M_2 - m_2) \\ 
    &> 0, 
    \end{aligned}
\end{equation}
where we used \eqref{eq:riemann invariants max min pple} to obtain the penultimate line and the condition \eqref{eq:initial data statement gamma general blow up C1 second} (\textit{cf.}~\eqref{eq:eigen impose}) to obtain the final inequality. 
Multiplying \eqref{eq:alpha 1 prime dynamic second} by $e^{h_1}$ and setting $\tilde{\alpha}_1\defeq  e^{h_1}\alpha_1$, we obtain the Riccati-type equation 
\begin{equation}\label{riccatione}
        \tilde{\alpha}_1^{\prime} = -e^{-h_1}\lambda_{2w_1} (\tilde{\alpha}_1)^2. 
\end{equation}
Using \eqref{eq:computations derivs eigenvalues wrt riemann invariants} and the explicit form of $h_1$, the previous equation yields 
\begin{equation*}
    (\tilde{\alpha}_1^{-1})^{\prime} = -2(1+\theta)\frac{(w_1-w_2)^{\frac{1-\theta}{2\theta}}}{w_1(1+\theta)+w_2(1-\theta) }, 
\end{equation*}
whence integrating along the characteristic $t \mapsto x(t)$ chosen such that
\begin{equation*}
    \left\lbrace\begin{aligned}
          & \frac{\der x}{\der t} = \lambda_2(t,x(t)), \\ 
          &x(0) = x_0, 
    \end{aligned}\right. 
\end{equation*}
we get 
\begin{equation}\label{eq:int along charac w1x general gamma}
    \frac{1}{\tilde{\alpha}_1(t,x(t))} = \frac{1}{\tilde{\alpha}_1(0,x_0)}  - 2(1+\theta) \int_0^t \frac{\big[w_1(s,x(s)) - w_2(s,x(s)) \big]^{\frac{1-\theta}{2\theta}}}{ (1+\theta)w_1(s,x(s))+ (1-\theta)w_2(s,x(s)) } \d s, 
\end{equation}
where we used the explicit formula for $h_1$. Note that this integral is well-defined and strictly positive by virtue of the lower bound \eqref{eq:important lower bound w1 computations}, which implies that the integrand is bounded; indeed, 
\begin{equation}\label{eq:first bounds integrand w1x}
    0 < \frac{1}{(1+\theta) M_1 + M_2 - \theta m_2}     \leq \frac{1}{w_1(1+\theta)+w_2(1-\theta)} \leq \frac{1}{(1+\theta)m_1 - (\theta M_2 - m_2) } < +\infty, 
\end{equation}
and, noting that $m_2 < M_2 < 0$ from the initial data assumptions, 
\begin{equation}\label{eq:second bounds integrand w1x}
    0 < (m_1 - M_2)^{\frac{1-\theta}{2\theta}} \leq (w_1 - w_2)^{\frac{1-\theta}{2\theta}} \leq (M_1 - m_2)^{\frac{1-\theta}{2\theta}} < +\infty. 
\end{equation}
We deduce from \eqref{eq:int along charac w1x general gamma} that, if $\alpha_1(0,x_0) \leq 0$ for all $x_0$, then $w_{1x}$ is well-defined for all times. On the other hand, if there exists $x_0$ such that $\alpha_1(0,x_0) > 0$, then there is blow-up in finite time, where the blow-up time $t_*$ is bounded above and below as follows: 
\begin{equation}\label{eq:blow up time bounds w1x}
    \frac{(1+\theta)m_1 -(\theta M_2 - m_2)}{2(1+\theta) \tilde{\alpha}_1(0,x_0)(M_1-m_2)^{\frac{1-\theta}{2\theta}}} \leq t_* \leq \frac{(1+\theta)M_1 + M_2 - \theta m_2}{2(1+\theta) \tilde{\alpha}_1(0,x_0)(m_1-M_2)^{\frac{1-\theta}{2\theta}}}, 
\end{equation}
where we used the lower and upper bounds provided by \eqref{eq:first bounds integrand w1x}--\eqref{eq:second bounds integrand w1x}.

\smallskip 

\noindent 2. \textit{Blow-up for $w_{2x}$}. We take a derivative with respect to $x$ in the equation for $w_2$ in \eqref{eq:riemann invariants eqns after hyp}. Setting $\alpha_2 = w_{2x}$, we get 
\begin{equation*}
    \alpha_2^\backprime + \lambda_{1 w_2}(\alpha_2)^2 + \lambda_{1 w_1} w_{1x} \alpha_2 = 0. 
\end{equation*}
As before, $w_1^\backprime = w_1^\prime + (\lambda_1-\lambda_2) w_{1x}$, whence the first equation in \eqref{eq:riemann invariants eqns after hyp} (\textit{i.e.}~$w_1'=0$) yields 
\begin{equation*}
    w_{1x} = \frac{w_1^\backprime}{\lambda_1 - \lambda_2}. 
\end{equation*}
The dynamical equation for $\alpha_2$ therefore becomes 
\begin{equation}\label{eq:dynamic eqn alpha 2}
    \alpha_2^\backprime + \lambda_{1 w_2}(\alpha_2)^2 + \lambda_{1 w_1}\frac{w_1^\backprime}{\lambda_1 - \lambda_2} \alpha_2 = 0. 
\end{equation}
Let 
\begin{equation*}
    h_2 = -\frac{(1-\theta)}{2\theta}\log( w_1-w_2 ) -\log \big[ - \big( w_1(1-\theta)+w_2 (1+\theta) \big) \big], 
\end{equation*}
which is well-defined on account of the estimate 
\begin{equation}\label{eq:important upper bound w2 computations}
   \begin{aligned} (1-\theta)w_1(t,x) + (1+\theta)w_2(t,x) &\leq \sup_\mathbb{R} w_1(t,\cdot) - \theta \inf_\mathbb{R} w_1(t,\cdot) + (1+\theta) \sup_\mathbb{R} w_2(t,\cdot) \\ 
   &\leq M_1 - \theta m_1 + (1+\theta) M_2 \\ 
   &< 0, 
   \end{aligned}
\end{equation}
where we used \eqref{eq:riemann invariants max min pple} to obtain the penultimate line and the condition \eqref{eq:initial data statement gamma general blow up C1 second} (\textit{cf.}~\eqref{eq:eigen impose}) to obtain the final inequality; see Proposition \ref{prop:invariant regions}. Observe that $h_2$ satisfies 
\begin{equation*}
    h_{2 w_1} = \frac{\lambda_{1 w_1}}{\lambda_1 - \lambda_2}, 
\end{equation*}
whence, using that $w_2^\backprime=0$ from \eqref{eq:riemann invariants eqns after hyp}, equation \eqref{eq:dynamic eqn alpha 2} reads 
\begin{equation*}
    \alpha_2^\backprime + \lambda_{1 w_2}(\alpha_2)^2 + h_2^\backprime \alpha_2 = 0. 
\end{equation*}
As before, we define $\tilde{\alpha}_2 \defeq  e^{h_2}\alpha_2$ and obtain the Riccati-type equation 
\begin{equation*}
    \tilde{\alpha}_2^\backprime = - e^{-h_2} \lambda_{1 w_2} (\tilde{\alpha}_2)^2, 
\end{equation*}
or, using the explicit form of $h_2$,
\begin{equation*}
    (\tilde{\alpha}_2^{-1})^\backprime = -2(1+\theta)\frac{ (w_1-w_2)^{\frac{1-\theta}{2\theta}}}{-[(1-\theta) w_1 + (1+\theta) w_2]}. 
\end{equation*}
Integrating the previous equation along the characteristic $t \mapsto y(t)$ chosen such that 
\begin{equation*}
    \left\lbrace\begin{aligned}
          & \frac{\der y}{\der t} = \lambda_1(t,y(t)), \\ 
          &y(0) = y_0, 
    \end{aligned}\right. 
\end{equation*}
we get 
\begin{equation}\label{eq:int along charac w2x general gamma}
    \frac{1}{\tilde{\alpha}_2(t,y(t))} = \frac{1}{\tilde{\alpha}_2(0,y_0)}  - 2(1+\theta) \int_0^t \frac{\big[w_1(s,y(s)) - w_2(s,y(s)) \big]^{\frac{1-\theta}{2\theta}}}{-[ (1-\theta)w_1(s,y(s))+ (1+\theta)w_2(s,y(s)) ]} \d s. 
\end{equation}
Again, the above integral is well-defined and strictly positive by virtue of the upper bound \eqref{eq:important upper bound w2 computations}, which implies that the integrand is bounded; indeed, 
\begin{equation}\label{eq:first bounds integrand w2x}
   \begin{aligned}
       0 < \frac{1}{-[m_1 - \theta M_1 + (1+\theta)m_2  ]}     &\leq \frac{1}{-[w_1(1-\theta)+w_2(1+\theta)]} \\ 
       &\leq \frac{1}{-[M_1 - \theta m_1 + (1+\theta) M_2 ] } \\ 
       &< +\infty, 
   \end{aligned} 
\end{equation}
and, noting that $m_2 < M_2 < 0$ from the initial data assumptions, 
\begin{equation}\label{eq:second bounds integrand w2x}
    0 < (m_1 - M_2)^{\frac{1-\theta}{2\theta}} \leq (w_1 - w_2)^{\frac{1-\theta}{2\theta}} \leq (M_1 - m_2)^{\frac{1-\theta}{2\theta}} < +\infty. 
\end{equation}
We deduce from \eqref{eq:int along charac w2x general gamma} that, if $\alpha_2(0,y_0) \leq 0$ for all $y_0$, then $w_{2x}$ is well-defined for all times. On the other hand, if there exists $y_0$ such that $\alpha_1(0,y_0) > 0$, then there is blow-up in finite time, where the blow-up time $t_{**}$ is bounded above and below as follows: 
\begin{equation}\label{eq:blow up time bounds w2x}
    \frac{-[M_1 - \theta m_1 + (1+\theta) M_2 ] }{2(1+\theta) \tilde{\alpha}_1(0,x_0)(M_1-m_2)^{\frac{1-\theta}{2\theta}}} \leq t_{**} \leq \frac{-[m_1 - \theta M_1 + (1+\theta)m_2  ]}{2(1+\theta) \tilde{\alpha}_1(0,x_0)(m_1-M_2)^{\frac{1-\theta}{2\theta}}}, 
\end{equation}
where we used the lower and upper bounds provided by \eqref{eq:first bounds integrand w2x}--\eqref{eq:second bounds integrand w2x}. 

\smallskip 

\noindent 3. \textit{Conclusion}. We have seen from \eqref{eq:int along charac w1x general gamma} and \eqref{eq:int along charac w2x general gamma} that if both $w_{1x}(0,x)<0$ and $w_{2x}(0,x) < 0$ for all $x\in\mathbb{R}$, then $\beta_x$ and $\sigma^{\theta-1} \sigma_x$ are well-defined for all times; given the positivity of $\sigma$ and using the equation to obtain finiteness of the time derivatives, this implies that $(\sigma,\beta)$ is a global $C^1$ solution. Otherwise, if this sign condition on the derivatives of the initial data fails at a point $x_0 \in \mathbb{R}$, we have finite time blow-up, with blow-up time satisfying the estimates \eqref{eq:blow up time bounds w1x} and \eqref{eq:blow up time bounds w2x}. The one-sided Lipschitz bounds \eqref{eq:one sided lipschitz gamma general} are deduced from \eqref{eq:int along charac w1x general gamma}--\eqref{eq:second bounds integrand w1x} and \eqref{eq:int along charac w2x general gamma}--\eqref{eq:second bounds integrand w2x} as per Step 3 of the proof of Theorem \ref{thm:blow up criterion gamma is 3}.
\end{proof}

\vspace{0.5cm}

\noindent\textbf{Acknowledgements.} The present work was triggered during discussions at the \emph{Journ\'ees Relativistes de Tours}, organised by X. Bekaert, Y. Herfray, S. Solodukhin and M. Volkov, held at the Institut Denis Poisson in June 2023. The authors acknowledge the hospitality and financial support from the Faculty of Mathematics of the University of Cambridge for the visits of NA, MP, and SMS in 2024. NA gratefully acknowledges support by an H.F.R.I. grant for postdoctoral researchers (3rd
call, no. 7126). SMS also acknowledges the support of Centro di Ricerca Matematica Ennio De Giorgi.

\printbibliography

@article{deBoerHartongObersSybesmaVandoren2023,
title = {Carroll stories},
author = {de Boer, J. and Hartong, J. and Obers, N. A. and Sybesma, W. and Vandoren, S.},
journal = {JHEP},
volume = {09},
year = {2023},
number = {148}
}

@article{deBoerHartongObersSybesmaVandoren2018,
title = {Perfect fluids},
author = {de Boer, J. and Hartong, J. and Obers, N. A. and Sybesma, W. and Vandoren, S.},
journal = {SciPost Phys.},
volume = {5},
year={2018},
number = {003}
}

@article{Hartong2015,
title = {Gauging the {C}arroll algebra and ultra-relativistic gravity},
author = {Hartong, J.},
journal = {JHEP},
volume = {1508},
year = {2015},
number = {069}
}

@article{BergshoeffGomisLonghi2014,
title = {Dynamics of {C}arroll Particles},
author = {Bergshoeff, E. and Gomis, J. and Longhi, G.},
journal = {Class. Quant. Grav.},
volume = {20},
pages = {205009},
year = {2014}
}

@book{Dafermos2016,
title = {Hyperbolic Conservation Laws in Continuum Physics},
author = {Dafermos, C. M.},
year = {2016},
edition = {3},
volume = {325},
series = {A Series of Comprehensive Studies in Mathematics},
publisher = {Springer-Verlag},
location = {Berlin}
}

@article{Herfray2022,
author = {Herfray, Y.},
title = {Carrollian manifolds and null infinity: a view from {C}artan geometry},
journal = {Class. Quantum Grav.},
volume = {39},
number = {21},
year = {2022},
pages = {215005}
}

@article{CiambelliLeighMarteauPetropoulos2019,
author = {Ciambelli, L. and Leigh, R. G. and Marteau, C. and Petropoulos, P. M.},
title = {Carroll structures, null geometry, and conformal isometries},
journal = {Phys. Rev. D},
volume = {100},
pages = {046010},
year = {2019}
}

@article{Morand2020,
author = {Morand, K.},
title = {Embedding {G}alilean and {C}arrollian geometries. {I}. {G}ravitational waves},
journal = {J. Math. Phys.},
volume = {61},
issue = {8},
year = {2020},
pages = {082502}
}

@article{BekaertMorand2018,
author = {Bekaert, X. and Morand, K.},
title = {Connections and dynamical trajectories in generalised {N}ewton--{C}artan gravity {II}: an ambient perspective},
journal = {J. Math. Phys.},
volume = {59},
issue = {7},
year = {2018},
pages = {072503}
}

@article{BekaertMorand2016,
author = {Bekaert, X. and Morand, K.},
title = {Connections and dynamical trajectories in generalised {N}ewton--{C}artan gravity {I}: an intrinsic view},
journal = {J. Math. Phys.},
volume = {57},
issue = {2},
year = {2016},
pages = {022507}
}

@article{DuvalGibbonsHorvathy2014b,
author = {Duval, C. and Gibbons, G. W. and Horvathy, P. A.},
title = {Conformal {C}arroll groups},
journal = {J. Phys. A: Math. Theor.},
volume = {47},
number = {33},
year = {2014},
pages = {335204}
}

@article{PetkouPetropoulosRiveraBetancourSiampos2022,
author = {Petkou, A. C. and Petropoulos, P. M. and Rivera-Betancour, D. and Siampos, K.},
title = {Relativistic fluids, hydrodynamic frames and their {G}alilean versus {C}arrollian avatars},
journal = {JHEP},
volume = {09},
year = {2022},
number = {162}
}

@article{CiambelliMarteauPetkouPetropoulosSiampos2018,
author = {Ciambelli, L. and Marteau, C. and Petkou, A. C. and Petropoulos, P. M. and Siampos, K.},
title = {Covariant {G}alilean versus {C}arrollian hydrodynamics from relativistic fluids},
journal = {Class. Quantum Grav.},
volume = {35},
pages = {165001},
year = {2018}
}

@article{DuvalGibbonsHorvathy2014,
author = {Duval, C. and Gibbons, G. W. and Horvathy, P. A.},
title = {Conformal {C}arroll groups and {BMS} symmetry},
journal = {Class. Quantum Grav.},
volume = {31},
number = {9},
pages = {092001},
year = {2014}
}

@article{DuvalGibbonsHorvathyZhang2014,
author = {Duval, C. and Gibbons, G. W. and Horvathy, P. A. and Zhang, P. M.},
title = {{C}arroll versus {N}ewton and {G}alilei: two dual non-{E}insteinian concepts of time},
journal = {Class. Quantum Grav.},
volume = {31},
number = {8},
pages = {085016},
year = {2014}
}

@article{Hennaux1979,
author = {Henneaux, M.},
title = {Geometry of Zero Signature Space-Times},
journal = {Bull. Soc. Math. Belg.},
volume = {31},
pages = {47--63},
year = {1979}
}

@article{LeeYu64,

author = {D. Lee and W. Yu},
title = {Cauchy’s problem for first order quasilinear hyperbolic systems},
journal = {Math. Prog. Sinica 7}, 
pages ={152-171},
year = {1964}}

@article{LevyLeblond1965,
author = {L\'evy-Leblond, J.-M.},
title = {Une nouvelle limite non-relativiste du groupe de {P}oincar\'e},
journal = {Annales de l'{IHP} {A}},
volume = {3},
number = {1},
pages = {1--12},
year = {1965}
}

@article{SenGupta1966,
author = {Sen Gupta, N. D.},
title = {On an analogue of the {G}alilei group},
journal = {Il Nuovo Cimento {A}},
volume = {44},
pages = {512--517},
year = {1966}
}

@article{LiuSing,
author = {Liu, T.-P.}, 
title = {The development of singularities in the nonlinear waves for quasi-linear hyperbolic partial differential equations}, 
journal = {J.~Differ.~Equations}, 
volume = 33, 
pages = "92--111", 
year = 1979}

@article{KlainSing,
author = {Klainerman, S. and Majda, A.}, 
title = {Formation of singularities for wave equations including the nonlinear vibrating string}, 
journal = {Commun.~Pure Appl.~Math.}, 
volume = 33, 
pages = "241--263", 
year = 1980}

@article{JohnSing,
author = {John, F.}, 
title = {Formation of singularities in one-dimensional nonlinear wave propagation}, 
journal = {Commun.~Pure Appl.~Math.}, 
volume = 27, 
pages = "377--405", 
year = 1974}

@article{Sideris,
author = {Sideris, T.}, 
title = {Formation of singularities of solutions to nonlinear hyperbolic equations}, 
journal = {Arch.~Ration.~Mech.~Anal.}, 
volume = 86, 
pages = "369--381", 
year = 1984}

@article{AthTianZhu,
author = {Athanasiou, N. and Bayles-Rea, T. and Zhu, S.}, 
title = {Development of singularities in the relativistic Euler equations}, 
journal = {Trans.~Amer.~Math.~Soc.}, 
volume = 376, 
pages = "2325--2372", 
year = 2023}

@book{Landau,
title = {Fluid Mechanics},
author = {Landau, L.~D. and Lifshitz, E.~M.},
year = {1959},
series = {(translated from the Russian  by J.~B.~Sykes and W.~H.~Reid))},
publisher = {Addison--Wesley Publishing Co.},
location = {Reading, MA}
}

@article{ChenChenZhu2019,
title = {Formation of singularities and existence of global continuous solutions for the compressible {E}uler equations},
author = {Chen, G. and Chen, G.-Q. G. and Zhu, S.},
journal = {arXiv:1905.07758},
year = {2019}
}

@article{Geng1,
author = {Chen, G. and Pan, R. and Zhu, S.}, 
title = {Singularity formation for the compressible Euler equations}, 
journal = {SIAM J.~Math.~Anal.}, 
volume = 49, 
pages = "2591--2614", 
year = 2017}

@article{Geng2,
author = {Chen, G. and Chen, G.-Q. and Zhu, S.}, 
title = {Formation of singularities and existence of global continuous solution for the compressible Euler equations}, 
journal = {SIAM J.~Math.~Anal.}, 
volume = 53, 
pages = "6280-–6325", 
year = 2021}

@article{Geng3,
author = {Chen, G. and Young, R. and Zhang, Q.}, 
title = {Shock formation in the compressible Euler equations and related systems}, 
journal = {J.~Hyperbolic Differ.~Equ.}, 
volume = 10, 
pages = "149-–172", 
year = 2013}

@article{Duality,
author = {Athanasiou, N. and Petropoulos, P. M. and Schulz, S.~M. and Taujanskas, G.}, 
title = {One-dimensional {C}arrollian fluids {I}: {C}arroll--{G}alilei duality}, 
journal = {CPHT-RR026.052024}, 
year = 2024
}

@article{TianruiThesis,
author = {Bayles-Rea, T.}, 
title = {On the Formation of Singularities for the Compressible Euler Equations}, 
journal = {DPhil Thesis, University of Oxford}, 
year = 2023}

@article{NikosSheng1,
author = {Athanasiou, N. and Zhu, S.}, 
title = {Formation of singularities for the relativistic Euler equations}, 
journal = {J.~Differ.~Equations}, 
volume = 284, 
pages = "284--317", 
year = 2021}

@book{Evans,
    author = "Evans, L.~C.",
    title = "{Partial Differential Equations}",
    publisher ="American Mathematical Society", 
series = "Graduate Studies in Mathematics",
number = "19",
    year = "1996"
}

@article{Lax,
    author = "Lax, P.~D.",
    title = "{Development of singularities of solutions of nonlinear hyperbolic partial differential equations}",
    journal = "J. Math. Phys.",
    volume = "5",
    number = "",
    pages = "611--614",
    year = "1964",
    DOI = "",
    keywords = ""
}

@article{Carrollcomp,
author = {Petropoulos, P. M. and Schulz, S.~M. and Taujanskas, G.},
title = {One-dimensional {C}arrollian fluids {III}: global existence and weak continuity in $L^\infty$},
journal = {CPHT-RR028.052024},
year={2024}
}

\end{document}